\documentclass[11pt]{amsart}
\usepackage{amsmath,amssymb,amsthm}
\usepackage{graphicx}
\usepackage{tikz}
\usepackage{tikz-cd}
\usepackage[hidelinks]{hyperref}

\newtheorem{theorem}{Theorem}[section]
\newtheorem{lemma}[theorem]{Lemma}

\newtheorem{thmA}{Theorem}

\theoremstyle{definition}
\newtheorem{definition}[theorem]{Definition}
\newtheorem{example}[theorem]{Example}
\newtheorem{remark}[theorem]{Remark}

\newcommand{\Chat}{\widehat{\mathbb{C}}}
\title[Non-uniqueness of characteristic curves]{Non-uniqueness of
characteristic curves of folding rational maps}

\author[L.~Yang]{Luxian Yang}
\address{School of Mathematical Sciences, Shenzhen University,
Shenzhen 518061, People's Republic of China}
\email{lxyang@szu.edu.cn}

\author[J.~Zeng]{Jinsong Zeng}
\address{School of Mathematical Sciences, Shenzhen University,
Shenzhen 518061, People's Republic of China}
\email{jinsongzeng@163.com}

\begin{document}

\begin{abstract}
A folding rational map is a postcritically finite rational map $f$ admitting an essential Jordan curve $\beta$, disjoint from the postcritical set $P_f$, whose preimage is disconnected and consists of Jordan curves homotopic to $\beta$ relative to $P_f$. Such $\beta$ is called a characteristic curve of $f$. We construct a real postcritically finite rational map $f$ of degree $6$ with $\#P_f=4$, which admits two non-homotopic characteristic curves $\beta$ and $\beta'$. It is hyperbolic with orbifold signature $(2,2,3,\infty)$. This non-uniqueness is not a Latt\`es phenomenon, nor does it arise from a mating decomposition. The map is constructed piecewise, and a numerical model is provided to illustrate its dynamics. Thurston obstructions are excluded by arc-lifting and intersection-number arguments.
\end{abstract}

\maketitle

\section{Introduction}\label{sec:intro}

\subsection{Background}
\label{sec:background}

Let $f:\Chat\to\Chat$ be a rational map of degree at least two. Denote by $\operatorname{Crit}(f)$ its set of critical points and by
\[
  P_f:=\overline{\bigcup_{n\ge1} f^{\circ n}(\operatorname{Crit}(f))}
\]
its postcritical set. The map $f$ is called \emph{postcritically finite} if $P_f$ is a finite set. A general principle in the study of rational dynamics is to decompose a dynamical system into invariant subsystems with simpler behavior. For postcritically finite maps, a natural way to do this is to cut along Jordan curves which are invariant up to homotopy rel $P_f$; the first return map on a periodic piece gives rise to a decomposition of $f$.

The classical instance of this principle is the notion of an equator, namely an essential Jordan curve $\gamma\subset\Chat\setminus P_f$ whose preimage $f^{-1}(\gamma)$ is again a single Jordan curve homotopic to $\gamma$ rel $P_f$. Cutting along the equator decomposes the rational map (or, if necessary, its second iterate) into two polynomials, and the map is the mating of the two polynomials \cite{Tan1992,Shi2000}.

Beyond the equator case, invariant curve systems for general postcritically finite rational maps were investigated by Cui, Peng and Tan \cite{CPT2016}. They proved that a non-simply-connected wandering continuum in the Julia set of a postcritically finite rational map is necessarily a Jordan curve, and that the homotopy classes of its forward iterates relative to the postcritical set form a \emph{Cantor multicurve} \cite[Theorem~1.2]{CPT2016}. Conversely, every Cantor multicurve can be promoted to an \emph{exact annular system}, whose non-escapting set contains uncountably many Jordan curves, all but countably many of which are wandering \cite[Theorem~1.1]{CPT2016}; the complementary pieces are points, closures of periodic Fatou domains, or filled Julia sets of renormalizations \cite[Theorem~1.3]{CPT2016}. To produce such maps, they introduced a surgery procedure called \emph{folding}, which turns a postcritically finite polynomial into a postcritically finite rational map admitting a Cantor multicurve consisting of a single Jordan curve \cite[Theorem~1.5]{CPT2016}. The folding construction is formalized by the following notion, which is the central object of the present paper. 

Recall that a Jordan curve $\gamma$ in $\Chat\setminus P_f$ is called \emph{essential} if each of the two components of $\Chat\setminus\gamma$ contains at least two points of $P_f$. By a \emph{Thurston map} we mean an orientation-preserving postcritically finite topological branched covering over $S^2$.

\begin{definition}[Folding map {\rm \cite[\S8.1]{CPT2016}}]\label{def:folding}
Let $F:\Chat\to\Chat$ be a Thurston map and let $\beta$ be an essential Jordan curve in $\Chat\setminus P_F$. The pair $(F,\beta)$ is called a \emph{folding map} if every component of $F^{-1}(\beta)$ is homotopic to $\beta$ rel $P_F$ and $F^{-1}(\beta)$ has at least two components. A postcritically finite rational map with such a curve $\beta$ is called a \emph{folding rational map}. 
\end{definition}
If a folding map $F$ is Thurston equivalent to a rational map $f$ via $(h_0, h_1)$, then clearly $(f,h_0(\beta))$ is a folding rational map. 

For a folding map $(F,\beta)$, the curve $\beta$ is called a \emph{characteristic curve}. We denote by $m(F,\beta)$ the number of components of $F^{-1}(\beta)$. By definition, $m(F,\beta)\ge2$.

Before discussing the uniqueness of characteristic curves of folding rational maps, we illustrate the definition with a concrete family in which the folding structure can be seen directly.

\begin{example}[Necklace maps]\label{ex:necklace}
In the family of \emph{McMullen maps} \cite{McM1988}
\[
  f_\lambda(z)=z^{n}+\frac{\lambda}{z^{n}},
  \qquad n\ge3,\ \lambda\in\mathbb{C}^{*},
\]
we may choose suitable $\lambda$ such that its finite critical values $\pm v:=\pm2\sqrt{\lambda}$ are preperiodic and lie on the boundary of the Fatou domain containing $0$. Then the Julia set of $f_{\lambda}$ is a necklace \cite[Theorem~1.8]{GYZ2026}, see Figure~\ref{fig:necklace}.  Moreover, the Fatou domains $B$ containing $\infty$ and $T$ containing $0$ are Jordan domains with disjoint closures. The annulus $A:=\Chat\setminus\overline{B\cup T}$ contains no point of $P_{f_\lambda}$, and $f_\lambda^{-1}(A)$ consists of exactly two sub-annuli $A_1,A_2$, each covering $A$ with degree $n$. A core curve $\beta$ of $A$ separates $\{\pm v\}$ from $\{\infty\}\cup \{f_\lambda^{\circ k}(\pm v):k\ge1\}$, and is therefore essential. The two components $\beta_1,\beta_2$ of  $f_\lambda^{-1}(\beta)$ are core curves of $A_1,A_2$ respectively. Then both $\beta_1$ and $\beta_2$ are homotopic to $\beta$ rel $P_{f_\lambda}$. Hence, $(f_\lambda,\beta)$ is a folding map with $m(f_\lambda,\beta)=2$. 
\end{example}

\begin{figure}[htbp]
  \centering
  \includegraphics[width=0.6\textwidth]{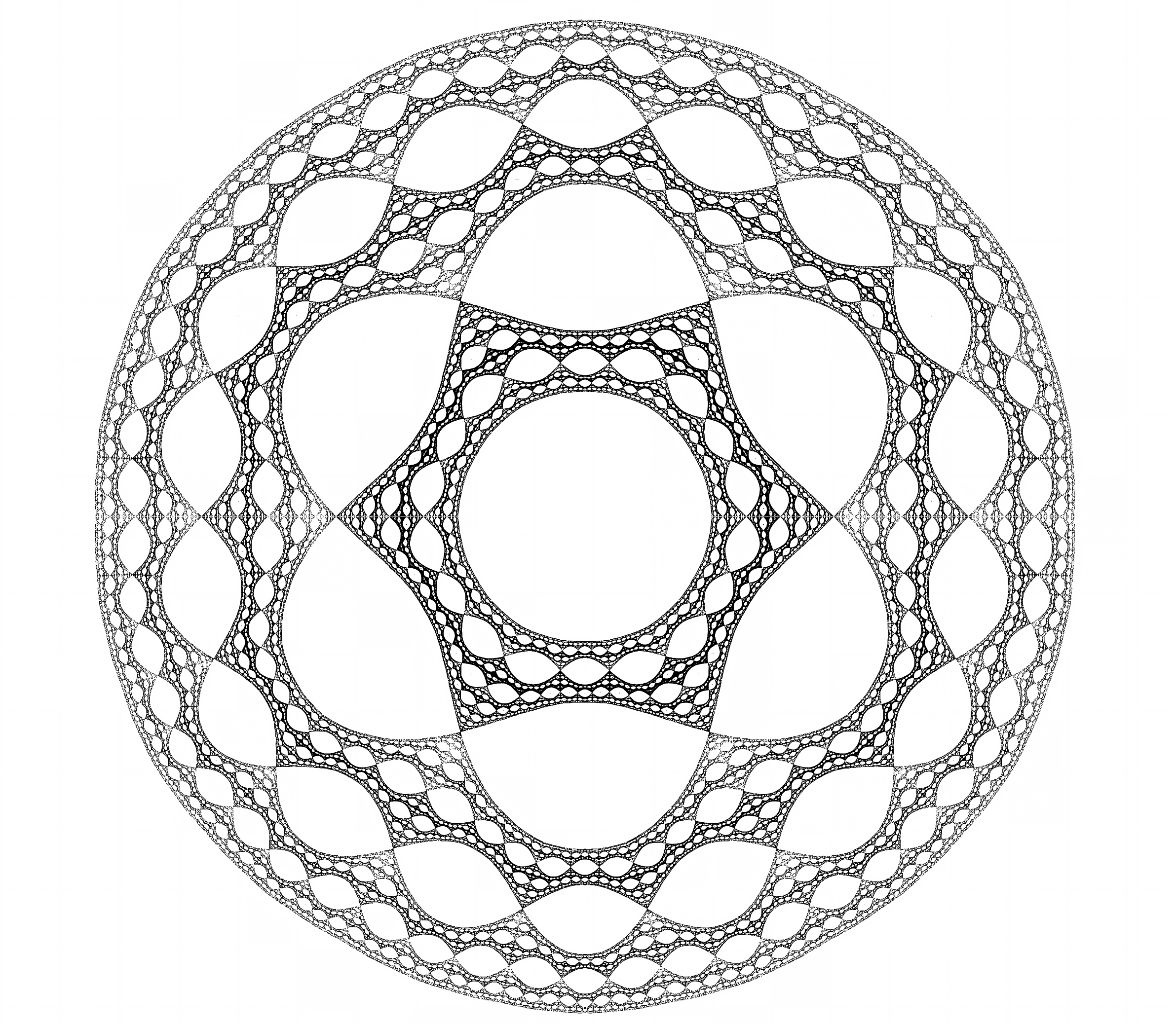}
  \caption{The necklace Julia set of the postcritically finite McMullen map $f_\lambda(z)=z^3+\lambda/z^3$ with $\lambda\approx0.01645$. }
  \label{fig:necklace}
\end{figure}

The present paper concerns the \emph{uniqueness} of the characteristic curve of folding rational maps. A rational map may admit several different mating decompositions, each carrying its own equator, and equators coming from different decompositions are not homotopic relative to the postcritical set \cite{ShiTan2000,Milnor2004,Rees2010}.  This shows that for rational maps arising from matings, equators are not unique up to homotopy. For folding rational maps, flexible Latt\`es maps induced by integer multiplication on a torus provide trivial examples with two non-homotopic characteristic curves \cite{DH1993,Milnor2006}. 

We are thus led to the basic question of this paper.

\begin{quote}
  \emph{Apart from Latt\`es maps, is the characteristic curve of a folding rational map unique up to homotopy relative to its postcritical set?}
\end{quote}

\subsection{Main results}\label{sec:main-result}

Our main result answers this question in the negative.

\begin{thmA}\label{thm:A}
There exists a real rational map $f:\Chat\to\Chat$ of degree $6$ satisfying the following properties.
\begin{enumerate}
  \item $f$ is postcritically finite with $P_f=\{0,v_0,v_1,v_2\}$. The point $0$ is a superattracting fixed point of local degree $3$, and $f^{-1}(0)=P_f$.
  \item $f$ admits two characteristic curves $\beta$ and $\beta'$:  $f^{-1}(\beta)$ (resp. $f^{-1}(\beta')$) consists of two essential Jordan curves, homotopic to $\beta$ (resp.\ $\beta'$) rel $P_f$, the curves map with degrees $4$ and $2$.
  \item $\beta$ and $\beta'$ are not homotopic rel $P_f$. More precisely the curve $\beta$ separates $\{0,v_0\}$ from $\{v_1,v_2\}$, whereas $\beta'$ separates $\{0,v_1\}$ from $\{v_0,v_2\}$.
  \item The orbifold of $f$ has signature $(2,2,3,\infty)$. In particular $f$ is not a Latt\`es map.
\end{enumerate}
\end{thmA}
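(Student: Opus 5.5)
We will build a Thurston map $F$ piecewise, show it is unobstructed so that Thurston's characterization gives a rational map $f$, and then read off properties (1)--(4) from the topological model, since these are invariant under Thurston equivalence.

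\emph{Combinatorics and orbifold.} We first fix the combinatorics. Let $P=\{0,v_0,v_1,v_2\}$. We require $F^{-1}(0)=P$ with local degrees $3$ at $0$, $2$ at $v_0$ and $v_1$, and $1$ at $v_2$ (total $3+2+2+1$... adjusted so the degrees sum to $6$). The remaining critical points are placed in the preimages of $P$ so that each $v_i$ is a critical value. Riemann--Hurwitz requires $10$ critical points counted with multiplicity. With $0$ a superattracting fixed point, $\nu(0)=\infty$. We arrange that every preimage of $v_0$ and $v_1$ has local degree dividing $2$, with at least one of degree exactly $2$; we do the same for $v_2$ with $3$. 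Then $\nu=(\infty,2,2,3)$ is the minimal orbifold function, which gives signature $(2,2,3,\infty)$. This orbifold is hyperbolic, since $1/2+1/2+1/3<2$, so $F$ is not Lattès, which is (4).

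\emph{Making both curves characteristic.} We realize $F$ as a real map, symmetric under complex conjugation, with $P\subset\mathbb R$. We build it from two annular pieces. Take $\beta$ to be a curve around $\{0,v_0\}$. We design $F^{-1}(\beta)$ to consist of two curves, each homotopic to $\beta$, mapping with degrees $4$ and $2$. This is done in the same spirit as the necklace example: cut the sphere along the two preimage curves into a disk around $\{0,v_0\}$, an annulus, and a disk around $\{v_1,v_2\}$, and prescribe branched covers on each piece that glue along the boundary curves.

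Simultaneously we must check that $\beta'$, a curve around $\{0,v_1\}$, has two preimages homotopic to $\beta'$ with degrees $4$ and $2$. The practical way to do this is to specify $F$ by a cell decomposition: the real line, together with the preimages of the real segments between postcritical points, forms a planar graph. In that graph we can compute lifts of both $\beta$ and $\beta'$ directly, by lifting arcs of $\beta$ crossing the segments $[0,v_0]$, etc. We then verify the homotopy classes of the lifts by counting their intersections with the real arcs joining postcritical points. This yields (2) and (3). Properties (3) and (1) are then immediate from the construction.

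\emph{Excluding obstructions (main obstacle).} By hyperbolicity of the orbifold, Thurston's theorem applies. With $\#P=4$, any obstruction is a single curve $\gamma$ with $\sum 1/\deg\ge1$ over the lifts of $\gamma$ homotopic to $\gamma$. Curves in $\Chat\setminus P$ up to homotopy are parametrized by slopes $p/q$, and $\beta$ and $\beta'$ correspond to two slopes. For $\beta$ itself the sum is $1/4+1/2<1$, and likewise for $\beta'$. For a general slope $\gamma$ we use the arc-lifting/intersection-number argument. Let $\alpha$ and $\alpha'$ be arcs in $\Chat\setminus\{$curves$\}$ joining postcritical points, dual to $\beta$ and $\beta'$. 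Since each postcritical point has all its preimages inside $P$ or mapped into $P$, we compute how the lifts of $\alpha$ and $\alpha'$ sit. We then show that for any essential $\gamma$ not homotopic to $\beta$ or $\beta'$, some lift of $\gamma$ homotopic to $\gamma$ would force
\[
i(\tilde\gamma,\alpha)\le i(\gamma,f^{-1}\alpha)
\]
and that this pulls back intersection numbers strictly, contradicting invariance, unless $\deg$ is large. Equivalently, we show that the pullback on slopes strictly decreases a complexity $|p|+|q|$ except at the slopes of $\beta$ and $\beta'$. The delicate part is making this contraction estimate uniform across all slopes, using both dual arcs together. Once it is done, $F$ is equivalent to a rational map $f$, which can be taken real by the uniqueness in Thurston's theorem together with the symmetry. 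Finally, a numerical solution confirms the degree-$6$ model.
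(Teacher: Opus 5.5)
Your outline has the right skeleton: a piecewise Thurston map, Thurston's theorem, realness via uniqueness plus conjugation symmetry, and the orbifold signature read off from fibres. But nothing in it pins down a map for which (2)--(3) and unobstructedness actually hold. Your local degrees over $0$ are inconsistent ($3+2+2+1=8\neq 6$). The paper uses $F^{-1}(0)=P_F$ with local degrees $(3,1,1,1)$ and, crucially, $F^{-1}(v_2)=\{c_2,c_4\}$ with local degrees $(3,3)$; both facts drive the obstruction argument. For $\beta'$, ``compute the lifts in a cell decomposition'' is not a verification. Whether a curve around $\{0,v_1\}$ has two homotopic lifts of degrees $4$ and $2$ depends entirely on where the annulus piece places its critical points. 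The paper puts $c_1,c_3$ (both mapping to $v_1$) on the real line. It then shows $F^{-1}([0,v_1])$ has exactly two components: one of degree $4$ containing $0$ and $c_1$, and $[v_2,v_0]$ of degree $2$ containing $c_3$. Riemann--Hurwitz then shows that the preimage of a thin regular neighbourhood $N$ of $[0,v_1]$ is two disks, so $\beta'=\partial N$ works. Nothing in your proposal produces this configuration.

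The obstruction step is the main gap; ``once it is done'' is the whole proof. The slope-contraction claim is stronger than needed: you only need $\lambda(\gamma)<1$, not that $\beta,\beta'$ are the only invariant classes. It is also unsupported. Your displayed inequality is garbled, and even the correct arc-lifting inequality $i(\tilde\gamma,\alpha)\le i(\gamma,\alpha)$ (which needs a lift of $\alpha$ homotopic to $\alpha$ rel $P$) gives only non-increase, not strict decrease.

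The paper proceeds by partition class. Take $\gamma$ in the class of $\beta$ (resp.\ $\beta'$) but not homotopic to it. Counting points of $F^{-1}(\gamma)$ on the lifts $\beta_1,\beta_2$ (degrees $4,2$) against $i(\gamma,\beta)$ gives $d_i\ge 2$ and $k\le 2$, hence only $\lambda(\gamma)\le 1$. The borderline case $k=2$, $d_1=d_2=2$ needs a separate fibre argument. The two lifts cobound an annulus free of $P_F$. Since $F^{-1}(0)=P_F$, this annulus is a single component mapping onto the side $V_1\ni v_2$ with degree $d_1+d_2=4$. That is impossible, because every preimage of $v_2$ has local degree $3$.

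For the third class $\{0,v_2\}\,|\,\{v_0,v_1\}$ there is no characteristic curve to intersect with. The paper lifts an arc from $0$ to $v_2$ disjoint from $\gamma$ and counts arc-ends on the side of a putative homotopic lift containing $0$ and $v_2$. There are $3+1=4$ ends over $0$ versus a multiple of $3$ over $v_2$, so $\lambda(\gamma)=0$. These mod-$3$ fibre counts are the missing idea, and they are exactly why the construction builds in the $(3,3)$ fibre over $v_2$.
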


\begin{figure}[htbp]
    \centering
    \begin{minipage}[c]{0.4\textwidth}
        \centering
        \includegraphics[width=\linewidth]{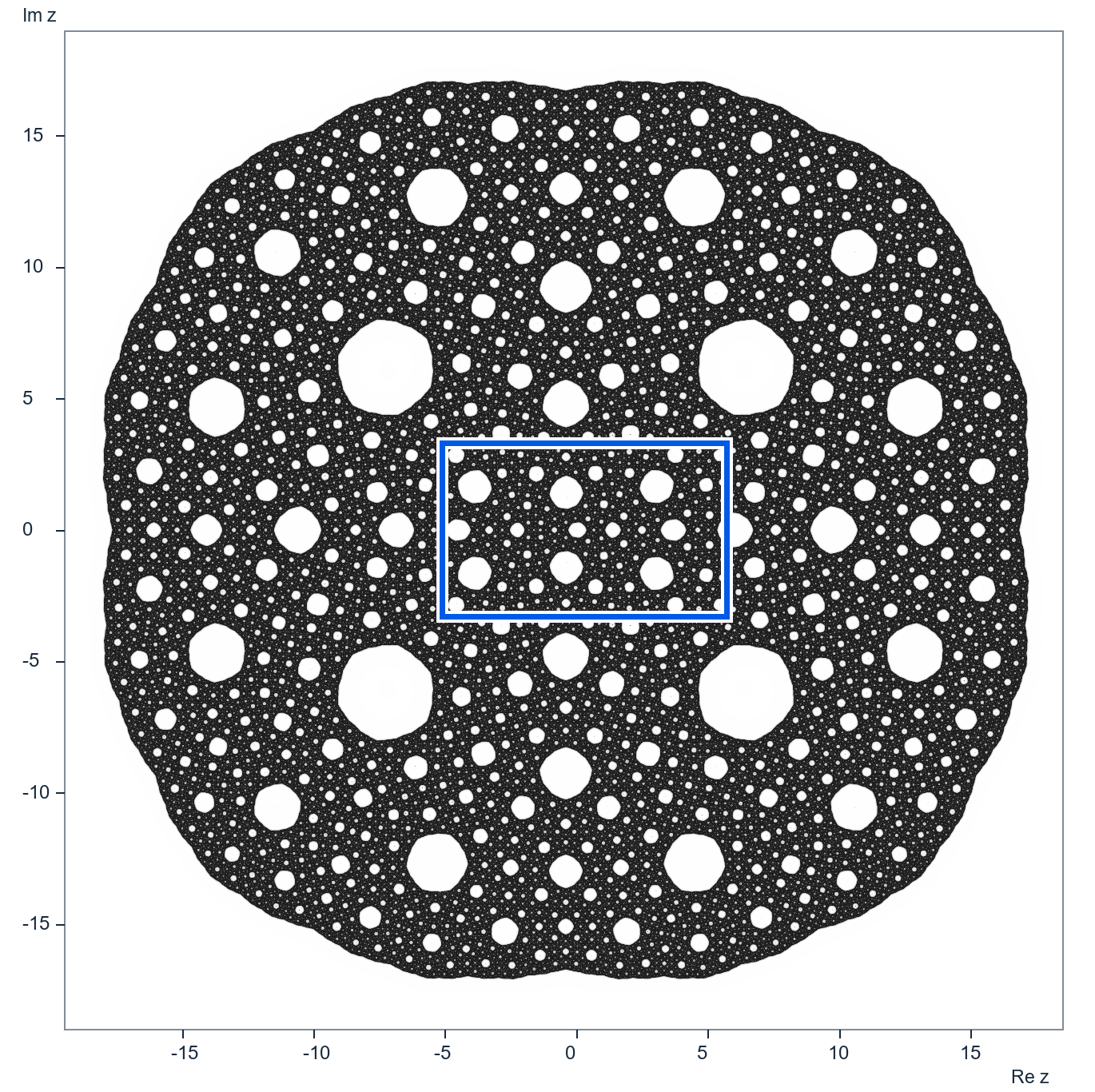}
    \end{minipage}
    \begin{minipage}[c]{0.56\textwidth}
        \centering
        \includegraphics[width=\linewidth]{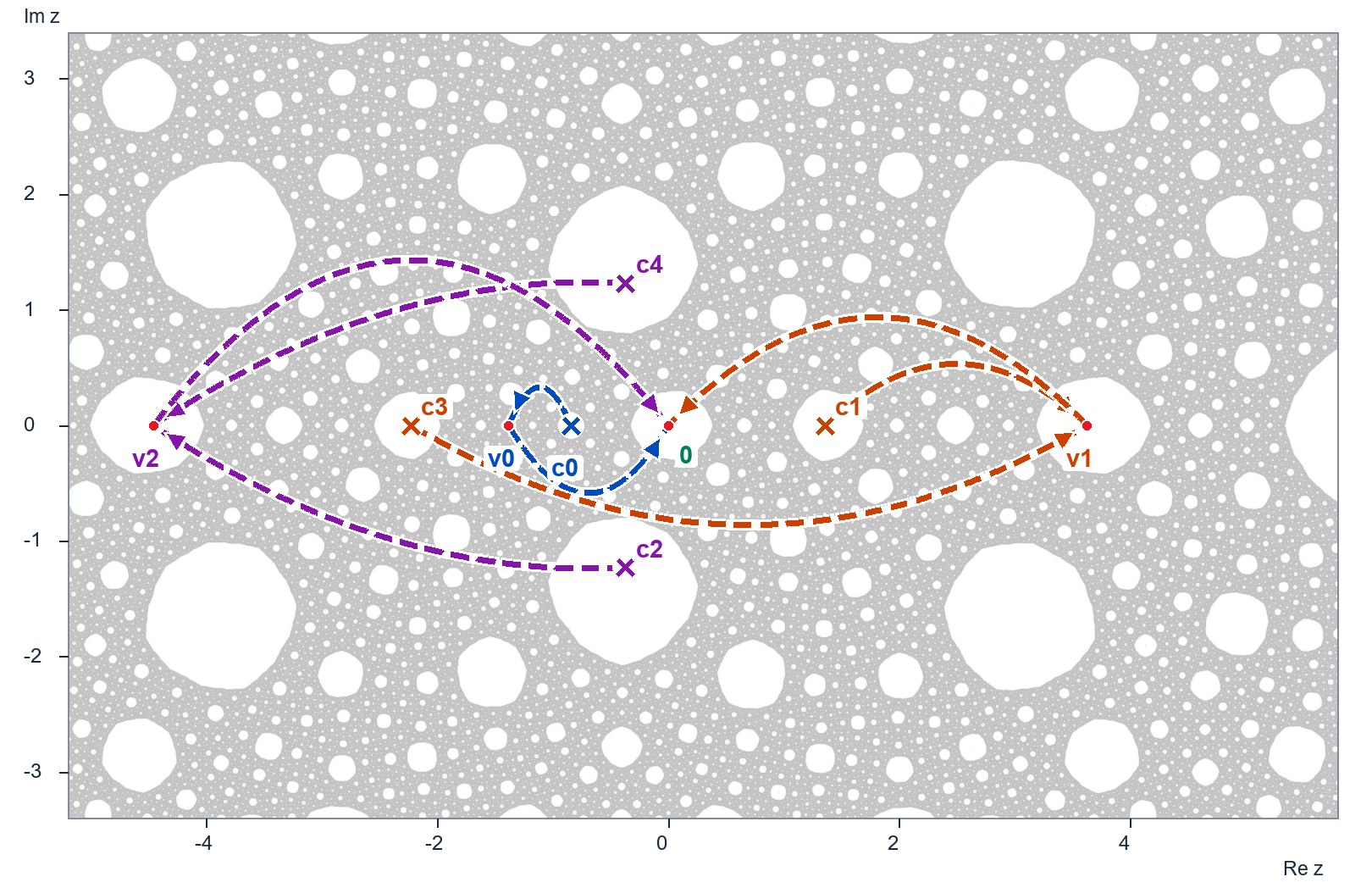}
        \includegraphics[width=\linewidth]{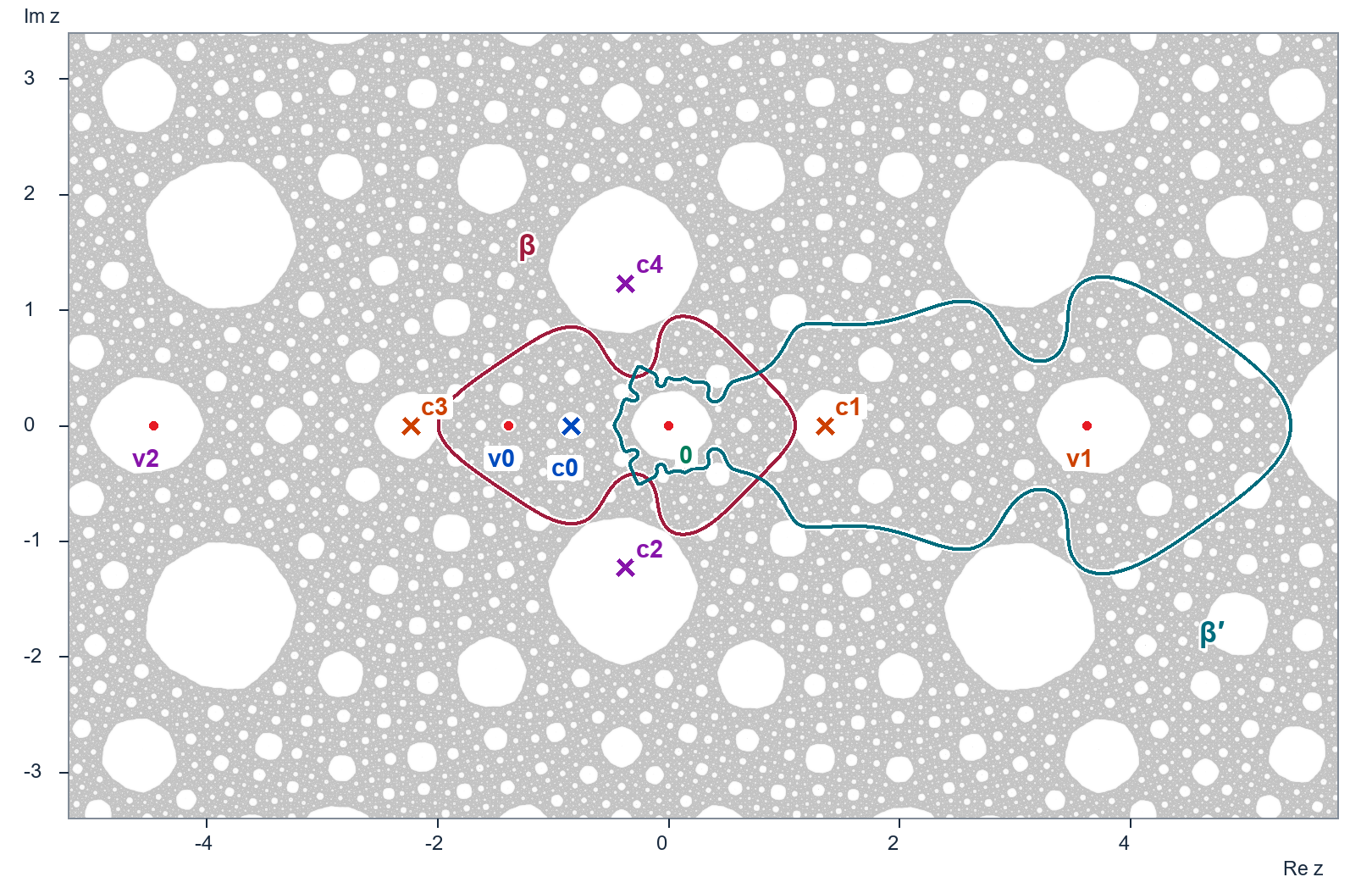}
    \end{minipage}
    \caption{The numerical model of $f$. Left: the Julia set of $f$. Top right: a magnification of the blue rectangle with the critical orbit marked. Bottom right: a magnification of the blue rectangle with the curves $\beta$ and $\beta'$ marked.}
    \label{fig:three-images}
\end{figure}

A numerical model of $f$ is given in \S\ref{sec:formula} (See Figure \ref{fig:three-images}). 

The folding rational maps from \cite[\S8]{CPT2016} and the McMullen examples above each have only one characteristic curve. Theorem~\ref{thm:A} shows that the uniqueness fails in general. If we relax the rationality requirement and consider branched coverings, then a slight modification of the example in \cite[\S 8.4]{CPT2016} yields branched coverings admitting two non-homotopic characteristic curves. All branched coverings obtained in this way have Thurston obstructions. In the present paper we construct a branched covering without Thurston obstructions.

Since the map of Theorem~\ref{thm:A} has exactly four postcritical points, it is natural to ask whether it already follows from known realization results. Nearly Euclidean Thurston maps, i.e., Thurston maps with four postcritical points and local degree two at every critical point, form a class for which invariant curves and Thurston obstructions have been analyzed systematically \cite{CFPP2012}; at the other extreme, expanding rational Thurston maps have Julia set equal to the whole sphere \cite{BM2017}.  Related examples with two invariant curve classes occur in blown-ups of Lattès maps \cite{BHI2024}. In those examples the distinguished horizontal and vertical curves have non-essential preimages. 

The map of Theorem~\ref{thm:A} lies beyond the reach of these results: its orbifold has signature $(2,2,3,\infty)$, so it is not nearly Euclidean; it has a superattracting fixed point, so it is not expanding; and the preimages of characteristic curves consist of essential curves in contrast to \cite{BHI2024}. To the best of our knowledge, no previously known result yields a folding rational map with two non-homotopic characteristic curves. 

\subsection{Strategy of the proof}\label{sec:strategy}

The proof of Theorem~\ref{thm:A} proceeds as follows. We first construct a Thurston map $F$ of degree $6$, defined piecewise on a decomposition of the sphere into two disks and an annulus: a quartic piece $g_1$ transplanted from an explicit polynomial model, a quadratic branched covering $g_2$ from the unbounded disk onto the bounded one, and an annulus piece $g_3$ of degree $6$ containing two sub-annuli mapping with degrees $4$ and $2$, glued along the boundary circles. 

The curves $\beta$ and $\beta'$ are realized by the two separation patterns $\{0,v_0\}\mid\{v_1,v_2\}$ and $\{0,v_1\}\mid\{v_0,v_2\}$ of $P_F=\{0,v_0,v_1,v_2\}$, and the construction provides, for each of them, two preimage components of degrees $4$ and $2$, all homotopic to the curve relative to $P_F$ (Lemma~\ref{lem:beta-prime}). Since $\#P_F=4$, every multicurve consists of a single curve, so a Thurston obstruction would be given by one essential curve. The three possible separation patterns of $P_F$ are excluded by arc-lifting and intersection-number arguments, the key input being the fibre $F^{-1}(v_2)=\{c_2,c_4\}$ with local degree $3$ at both points. 

Then Thurston's theorem \cite{DH1993} provides a rational map $f$, unique up to M\"obius conjugacy. The symmetry of $F$ under complex conjugation yields real coefficients. The construction and the proof are carried out in detail in \S\ref{sec:pieces}--\ref{sec:realization}.

\section{Construction and realization of $F$}\label{sec:construction}

In this section we first fix the terminology from Thurston's theory used below, then construct the Thurston map $F$ of Theorem~\ref{thm:A} piece by piece,  and finally show that $F$ admits no Thurston obstructions.

\subsection{Thurston theory}\label{sec:thurston}
Two Thurston maps $F$ and $G$ are \emph{Thurston equivalent} if there are orientation-preserving homeomorphisms $h_0,h_1:(S^{2},P_F)\to(S^{2},P_G)$, isotopic rel $P_F$, such that $h_0\circ F=G\circ h_1$.

A \emph{multicurve} is a collection $\Gamma=\{\gamma_1,\dots,\gamma_n\}$ of pairwise disjoint, pairwise non-homotopic essential Jordan curves in $S^{2}\setminus P_F$. It is called \emph{$F$-invariant} if every essential component of $F^{-1}(\Gamma)$ is homotopic rel $P_F$ to a curve in $\Gamma$. The \emph{Thurston linear transformation} $F_\Gamma:\mathbb{R}^{\Gamma}\to\mathbb{R}^{\Gamma}$ is defined by
\[
  F_\Gamma(\gamma)=\sum_{\delta\in\Gamma}\Biggl(
  \sum_{\substack{\alpha\subset F^{-1}(\gamma)}}
  \frac{1}{\deg(F:\alpha\to\gamma)}\Biggr)\,\delta,
\]
where the inner sum runs over the essential components $\alpha$ of $F^{-1}(\gamma)$ homotopic to $\delta$ rel $P_F$. An invariant multicurve $\Gamma$ is a \emph{Thurston obstruction} if the spectral radius (which is the largest eigenvalue by the Perron-Frobenius
theorem) of $F_{\Gamma}$ is $\ge1$.

The \emph{orbifold} of $F$ is the pair $(S^{2},\nu_F)$, where $\nu_F: P_F\to \mathbb{Z}_{\geq 2}\cup \{\infty\}$: for $p\in P_F$, $\nu_F(p)$ is the least common multiple of the local degrees $\deg(F^{\circ n},z)$ over all $n\ge1$ and all $z$ with $F^{\circ n}(z)=p$. It is \emph{hyperbolic} if its Euler characteristic $2-\sum_{p\in P_F}\bigl(1-1/\nu_F(p)\bigr)$ is negative.

\begin{theorem}[\cite{DH1993}]\label{thm:thurston}
A Thurston map $F$ with hyperbolic orbifold is Thurston equivalent to a rational map if and only if $F$ has no Thurston obstructions. In this case the rational map is unique up to M\"obius conjugacy.
\end{theorem}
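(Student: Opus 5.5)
The statement is the Douady--Hubbard form of Thurston's characterization theorem. I would follow the standard Teichmüller-theoretic route. Let $\mathcal{T}_F$ be the Teichmüller space of the marked sphere $(S^2,P_F)$, i.e.\ complex structures on $S^2$ up to isotopy rel $P_F$. Pulling back complex structures by $F$ defines the Thurston pullback map $\sigma_F:\mathcal{T}_F\to\mathcal{T}_F$. Concretely, given $\tau=[\phi]$ with $\phi:S^2\to\Chat$, uniformize $\phi\circ F$ to obtain $\psi$ such that $\phi\circ F\circ\psi^{-1}$ is rational, and set $\sigma_F(\tau)=[\psi]$. The first step is the standard observation that $F$ is Thurston equivalent to a rational map exactly when $\sigma_F$ has a fixed point; the pair $(h_0,h_1)=(\phi,\psi)$ then gives the equivalence.

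Next I would establish contraction. $\sigma_F$ is holomorphic, and its coderivative is the pushforward $F_*$ on integrable quadratic differentials, which satisfies $\|F_*q\|\le\|q\|$. Hence $\sigma_F$ is weakly contracting for the Teichmüller metric. When the orbifold is hyperbolic, some iterate $\sigma_F^{\circ k}$ is strictly contracting: equality $\|F_*q\|=\|q\|$ would force $q$ to be $F$-invariant, and that happens only in the Lattès, Euclidean-orbifold situation. Uniqueness follows at once, since two fixed points would violate strict contraction. This gives uniqueness up to Möbius conjugacy.

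The main step is existence in the absence of obstructions. Take $\tau_n=\sigma_F^{\circ n}(\tau_0)$. If $(\tau_n)$ stays in a compact subset of moduli space, then strict contraction on compacta yields convergence to a fixed point. Otherwise curves on the Riemann surfaces $\tau_n$ become arbitrarily short. I would then apply the collar lemma together with the degree estimate $\ell_{\sigma(\tau)}(\alpha)\lesssim \deg(F|_\alpha)\,\ell_\tau(\gamma)$ for components $\alpha$ of $F^{-1}(\gamma)$. This lets me compare the inverse lengths $1/\ell$ of the short curves along the orbit.

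From this I would extract a multicurve $\Gamma$ of curves whose lengths tend to $0$, chosen with a gap below all other lengths. The comparison shows that $\Gamma$ is $F$-invariant and that the vector $(1/\ell_{\tau_n}(\gamma))_{\gamma\in\Gamma}$ is almost dominated by its image under $F_\Gamma$. That forces $\lambda(F_\Gamma)\ge1$, so $\Gamma$ is a Thurston obstruction. Conversely, an obstruction prevents a fixed point: by the Grötzsch inequality, moduli of annuli around $\Gamma$ cannot be consistent with $\lambda\ge1$ for a rational map with hyperbolic orbifold.

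I expect this short-curve analysis, namely producing a gap in the length spectrum and proving invariance of the resulting multicurve, to be the main obstacle. In the application here it simplifies, because $\#P_F=4$ and every multicurve is a single curve.
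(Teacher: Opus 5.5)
The paper gives no proof of this statement. It imports it from Douady--Hubbard, and your outline follows their Teichm\"uller-space strategy. The framework is sound: the pullback map $\sigma_F$, fixed points corresponding to rational realizations, $\|F_*q\|\le\|q\|$, strict contraction of an iterate for hyperbolic orbifolds, and convergence when the orbit projects into a compact part of moduli space.

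\emph{The existence step.} Here there is a genuine gap, and the tool you name points the wrong way. Write $f_\tau=\phi\circ F\circ\psi^{-1}$. Your estimate $\ell_{\sigma_F(\tau)}(\alpha)\le\deg(F|_\alpha)\,\ell_\tau(\gamma)$ comes from $f_\tau$ being a local isometry $\Chat\setminus f_\tau^{-1}(\phi(P_F))\to\Chat\setminus\phi(P_F)$, followed by Schwarz--Pick for the inclusion into $\Chat\setminus\psi(P_F)$. It only says that preimages of short curves are short. For $v_n=(1/\ell_{\tau_n}(\gamma))_{\gamma}$ this gives roughly $v_{n+1}\gtrsim F_\Gamma v_n$. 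That is compatible with $\lambda(F_\Gamma)<1$, so it can never yield ``$v$ almost dominated by $F_\Gamma v$''. What you need is the reverse statement. For small $\epsilon$, every geodesic $\gamma$ of length $<\epsilon$ on $\sigma_F(\tau)$ must be homotopic to components $\alpha_i$ of $F^{-1}(\gamma'_i)$, with $\gamma'_i$ short geodesics of $\tau$, and
\[
  \frac{1}{\ell_{\sigma_F(\tau)}(\gamma)}\le\sum_i\frac{1}{\deg(F|_{\alpha_i})}\cdot\frac{1}{\ell_\tau(\gamma'_i)}+C .
\]
This is the real content of the Douady--Hubbard argument. The collar of $\gamma$ in $\Chat\setminus\psi(P_F)$ may contain up to $\deg F\cdot\#P_F$ points of $f_\tau^{-1}(\phi(P_F))\setminus\psi(P_F)$. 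These cut it into sub-annuli whose total modulus is smaller by at most a bounded amount. Only after these extra punctures are removed is $f_\tau$ a covering along which the collars can be pushed down to $\tau$.

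\emph{Getting an invariant multicurve.} A non-precompact orbit does not directly give a fixed multicurve whose lengths tend to $0$ with a gap below all other lengths. The short curves may change with $n$, and the existence of such a canonical multicurve is a later theorem of Pilgrim. Instead, argue at finite times:
\begin{itemize}
  \item Weak contraction gives $d(\tau_n,\tau_{n+1})\le d(\tau_0,\tau_1)$, so by Wolpert's inequality each length changes by a bounded factor per step.
  \item Choose a gap in the length spectrum whose ratio is large compared with $e^{2T d(\tau_0,\tau_1)}\deg F$. Then the multicurve $\Gamma_n$ of curves below the gap stays unchanged for $T$ steps.
  \item The easy estimate makes $\Gamma_n$ $F$-invariant, and the reverse estimate gives $v_{n+T}\le F_{\Gamma_n}^{T}v_n+C_T$ on $\Gamma_n$.
  \item Only finitely many Thurston matrices can occur. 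So absence of obstructions gives $\|F_\Gamma^{T}\|\le\frac14$ uniformly for suitable $T$, which bounds the systole of $\tau_n$ from below.
\end{itemize}

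\emph{The converse direction.} A Gr\"otzsch comparison of annuli yields only $\lambda\le1$. The case $\lambda=1$ must be excluded by analysing equality, where the pulled-back annuli would have to fill the extremal cylinders exactly. That is precisely where hyperbolicity of the orbifold is used: flexible Latt\`es maps do have invariant curves with $\lambda=1$.
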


In this paper $\#P_F=4$, and obstructions take the following simple form. On the sphere with four punctures, any two disjoint essential Jordan curves are homotopic. Consequently every multicurve is a singleton $\{\gamma\}$, and the spectral radius is
\begin{equation}\label{eq:lambda}
  \lambda(\gamma)=\sum_{\substack{\alpha\subset F^{-1}(\gamma)\\
  \alpha\sim\gamma}}\frac{1}{\deg(F:\alpha\to\gamma)}.
\end{equation}

For two essential Jordan curves $\alpha,\beta$ in $S^{2}\setminus P_F$, their \emph{geometric intersection number} $\textup{i}(\alpha,\beta)$ is the minimum of $\#(\alpha'\cap\beta')$ over all curves $\alpha',\beta'$ homotopic to $\alpha,\beta$ rel $P_F$, respectively. Representatives attaining the minimum are said to be in \emph{minimal position}. By definition $\#(\alpha\cap\beta)\ge \textup{i}(\alpha,\beta)$, with equality in minimal position. On the sphere with four punctures, $\textup{i}(\alpha,\beta)=0$ only when $\alpha$ and $\beta$ are homotopic rel $P_F$, since disjoint essential curves are homotopic to each other.

\subsection{The three pieces}\label{sec:pieces}

We now work on the Riemann sphere $\Chat$ and write $\sigma(z)=\bar z$ for complex conjugation. Its fixed point set is the \emph{real circle} $\widehat{\mathbb{R}}=\mathbb{R}\cup\{\infty\}$. All disks, annuli and maps constructed below are chosen to be $\sigma$-symmetric.

The sphere is decomposed as
\[
  \Chat=D_2\cup A\cup D_1,
\]
where $D_1$ is a bounded closed Jordan disk containing $0,v_0$ in its interior (defined through the quartic model in \eqref{eq:Delta0} below) and $D_2$ is an unbounded closed Jordan disk containing $\infty,v_1,v_2$, with $D_1\cap D_2=\emptyset$. The annulus $A$ is bounded by $\partial D_1$ and $\partial D_2$. The map $F$ will be defined piecewise on the three parts: a quartic piece $g_1: D_1\to D$, where $D$ is a closed Jordan disc containing $D_1$ in its interior, a quadratic piece $g_2:D_2\to D$ and an annulus piece $g_3:A\to\Chat\setminus D$.

The Jordan curve
\[
  \beta:=\partial D
\]
separates $\{0,v_0\}$ from $\{\infty,v_1,v_2\}$. It meets the real circle in two points $b^-<0<b^+$, with $b^-\in (v_2,v_0)$ and $b^+\in (0,v_1)$.

\subsection*{The quartic piece $g_1$.}
The piece $g_1$ is transplanted from an explicit polynomial model. Consider the quartic polynomial with real coefficients
\begin{equation}\label{eq:model}
  p(z)=z^{3}\Bigl(z+\frac{4\sqrt[3]{4}}{3}\Bigr). 
 % p'(z)=4z^{2}\bigl(z+\sqrt[3]{4}\bigr).
\end{equation}
Its critical points are $0$, which is a superattracting fixed point of local degree $3$, and $-\sqrt[3]{4}$, which is a preperiodic point of local degree $2$. Hence, $p$ is postcritically finite with connected Julia set. Its filled Julia set is shown in Figure~\ref{fig:g0}. 
\begin{figure}[htbp]
\centering
\begin{tikzpicture}
\node at (0,0){\includegraphics[width=0.8\textwidth]{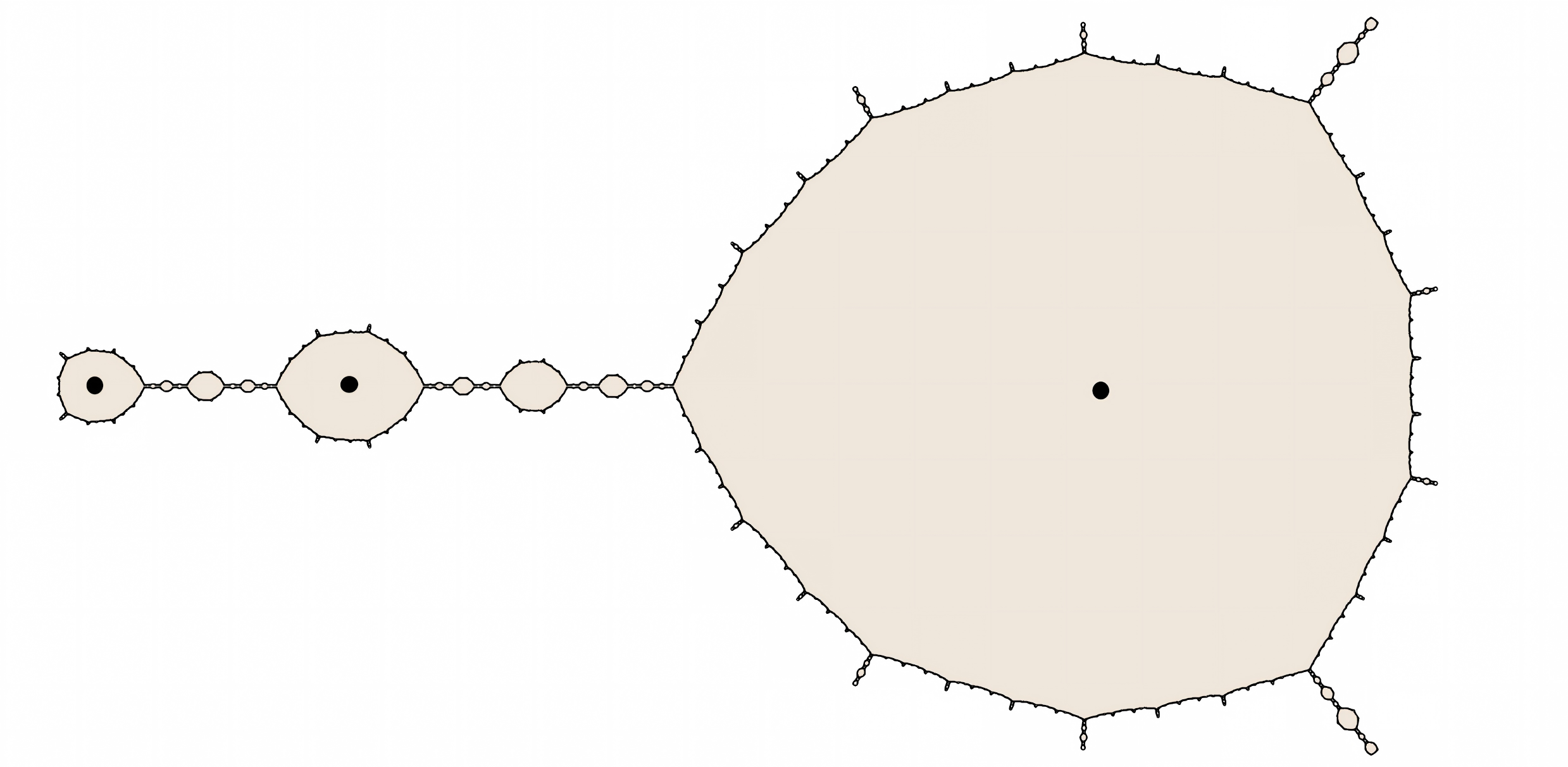}};
\node at (2., -0.7){$0$};
\node at (-4.25, -0.7){$v^{\textup{mod}}_0$};
\node at (-2.85, -0.7){$-\sqrt[3]{4}$};
%\ruler{5}{2}
\end{tikzpicture}
\caption{The filled Julia set of the quartic model $p$ of \eqref{eq:model}. The point $0$ is a superattracting fixed point of local degree $3$. The real critical point $-\sqrt[3]{4}$ satisfies $p(-\sqrt[3]{4})=v_0^{\mathrm{mod}}$ and $p(v_0^{\mathrm{mod}})=0$.}
\label{fig:g0}
\end{figure}

Let $G_p$ denote the Green function of the filled Julia set $K(p)$, extended by $0$ on $K(p)$. For a sufficiently small $\rho>0$, set
\begin{equation}\label{eq:Delta0}
  \Delta:=\{z\in\mathbb{C}:G_p(z)\leq \rho\},\quad
  \Delta_1:=p^{-1}(\Delta)=\{z\in\mathbb{C}:G_p(z)\leq\rho/4\}.
\end{equation}
Since the Julia set is connected, $\Delta$ and $\Delta_1$ are closed Jordan disks bounded by equipotentials, with $K(p)\subset\Delta_1\Subset\Delta$. The restriction $p:\Delta_1\to\Delta$ is a proper branched covering of degree $4$, mapping $\partial\Delta_1$ onto $\partial\Delta$ with degree $4$, and $0,-\sqrt[3]{4},v_0^{\mathrm{mod}}\in\Delta_1$.

The piece $g_1$ is the topological transplant of this model: choose a $\sigma$-equivariant homeomorphism $H_0:D\to\Delta$ identifying the marked points $0,v_0\in D$ with the model points $0,v_0^{\mathrm{mod}}\in\Delta$ such that $D_1=H_0^{-1}(\Delta_1)$, and define $g_1:=H_0^{-1}\circ p\circ H_0\,:\ D_1\to D$. Then $g_1$ is a proper branched covering of degree $4$. It has two critical points. One is $0$, which is a superattracting fixed point of local degree $3$. The other one is $c_0:=H_0^{-1}(-\sqrt[3]{4})$, which is a preperiodic point of local degree $2$. Moreover $g_1(c_0)=v_0$ and $g_1(v_0)=0$. In angle coordinates $\theta\in\mathbb{R}/2\pi\mathbb{Z}$ on the boundary circles, the boundary restriction $g_1:\partial D_1\to\beta=\partial D$ is the degree $4$ covering $\theta\mapsto4\theta$.

\subsection*{The quadratic piece $g_2$.}
The disk $D_2$ is unbounded. In symmetric coordinates we take $D_2=\{z\in\mathbb{C}:|z|\ge R\}\cup\{\infty\}$ with $R>0$. Choose the marked points $v_1,v_2$ in $\rm{int}(D_2)$ so that  $v_2<0<v_1$. Let $g_2:D_2\longrightarrow D$ be a $\sigma$-equivariant proper branched covering of degree $2$ whose unique critical point is $\infty$, of local degree $2$, with $g_2(\infty)=v_0$, and such that $g_2^{-1}(0)=\{v_1,v_2\}$. Such a covering exists: it is modeled on $z\mapsto z^{2}$ between disks and its boundary restriction $g_2:\partial D_2\to\beta$ is the degree $2$ circle covering $\theta\mapsto2\theta$ in angle coordinates.

\subsection*{The annulus piece $g_3$. }
The annulus piece $g_3:A\to\Chat\setminus D$ is constructed as follows (see Figure~\ref{fig:g2}): Choose a $\sigma$-invariant closed Jordan disk $B\subset \rm{int}(D_2)$ whose boundary passes through the marked points $v_1$ on the right and $v_2$ on the left. Then $W:=\Chat\setminus (B\cup D)$ is an annulus bounded by $\beta:=\partial D$ and by $\partial B$.

\begin{figure}[htbp]
\centering
\begin{tikzpicture}
\node at (0,0){\includegraphics[width=0.7\textwidth]{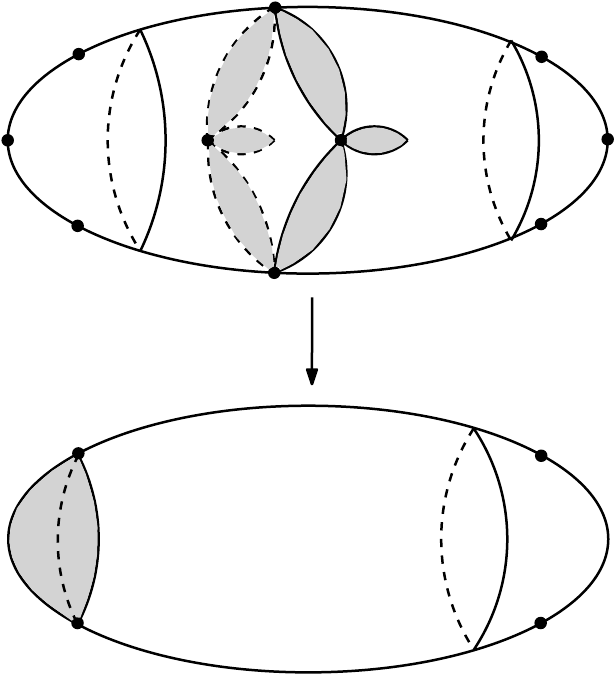}};
\node at (4.65, 2.85){$c_0$};
\node at (-4.6, 2.85){$\infty$};
\node at (-3.5, 2.85){$D_2$};
\node at (-3.5, 4.35){$v_1$};
\node at (-3.5, 1.3){$v_2$};
\node at (-0.5, 5.05){$c_1$};
\node at (-0.5, 0.6){$c_3$};
\node at (-1.75, 2.85){$c_4$};
\node at (0.15, 2.85){$c_2$};
\node at (3.8, 2.85){$D_1$};
\node at (3.5, 4.35){$0$};
\node at (3.5, 1.3){$v_0$};
\node at (-3.5, -4.4){$v_2$};
\node at (-3.5, -1.35){$v_1$};
\node at (3.5, -4.4){$v_0$};
\node at (3.5, -1.35){$0$};
\node at (3.5, -2.85){$D$};
\node at (0.4,0){$g_3$};
\node at (-3.9, -2.85){$B$};
\node at (0.2, 3.75){$L_{12}$};
\node at (0.2, 1.85){$L_{23}$};
\node at (-1, 1.85){$L_{34}$};
\node at (-1, 3.75){$L_{41}$};
\node at (0.9, 2.8){$E^+$};
\node at (-0.9, 2.8){$E^-$};
\node at (2.4, -1){$b^+$};
\node at (2.4, -4.8){$b^-$};
\node at (3, 4.6){$x^+$};
\node at (3, 1.2){$x^-$};
\node at (-2.5, 4.7){$y^+$};
\node at (-2.5,1){$y^-$};
\node at (2.5,-3){$\beta$};
%\ruler{5}{5}
\end{tikzpicture}
\caption{
The annulus piece $g_3:A\to\Chat\setminus D$.}
\label{fig:g2}
\end{figure}

Inside $A$, let $S$ be the union of six closed Jordan disks arranged in a ring as in the top of Figure~\ref{fig:g2}: four \emph{lens disks} $L_{12},L_{23},L_{34},L_{41}$ and two \emph{ear disks} $E^+,E^-$ interchanged by $\sigma$. Consecutive lens disks meet at single points
\[
  L_{41}\cap L_{12}=\{c_1\},\quad
  L_{12}\cap L_{23}=\{c_2\},\quad
  L_{23}\cap L_{34}=\{c_3\},\quad
  L_{34}\cap L_{41}=\{c_4\},
\]
with $c_1,c_3\in\widehat{\mathbb{R}}$ and $c_4=\sigma(c_2)$. Moreover $E^+$ passes through $c_2$ and $E^-$ through $c_4$. In particular $c_1,c_2,c_3,c_4\in\partial S$. The complement $A\setminus S$ has two components, both annuli: the outer one $A_2$, adjacent to $\partial D_2$, and the inner one $A_1$, adjacent to $\partial D_1$.

The map $g_3$ is constructed in two steps. All the maps are chosen $\sigma$-equivariant to guarantee the uniqueness. First, send each of the six disks of $S$ homeomorphically onto $B$: one prescribes the vertex images $c_1,c_3\mapsto v_1$ with the local model $z^2$, $c_2,c_4\mapsto v_2$ with the local model $z^3$, extends them to homeomorphisms between the corresponding boundary arcs, and then to homeomorphisms of the closed disks. Note that the restriction of this map to $\partial S$ covers $\partial B$, and it has degree $2$ on $\partial A_2\setminus \partial D_2$ and has degree $4$ on $\partial A_1\setminus \partial D_1$, which agree with the degrees of $g_2|_{\partial D_2}$ and $g_1|_{\partial D_1}$ respectively. Secondly, extend the boundary coverings to unbranched annulus coverings $A_1\to W$ of degree $4$ and $A_2\to W$ of degree $2$. Since both coverings preserve the real line $\widehat{\mathbb{R}}$, each of the two coverings is unique up to isotopy relative to the boundary. They glue to a continuous map $g_3:A\to\Chat\setminus \rm{int}(D)$.

The resulting map $g_3$ is a branched covering of degree $6$: a point of $\rm{int}(B)$ has six preimages, one in each disk of $S$, and a point of $W$ has $6$ preimages, $2$ in $A_2$ and $4$ in $A_1$. Thus,
$$S=g_3^{-1}(B)\quad\textup{and}\quad g_3^{-1}(W)=A_2\cup A_1.$$
Its critical points are $c_1,c_3$ (of local degree $2$), and $c_2,c_4$ (of local degree $3$). The critical values $v_1=g_3(c_1)=g_3(c_3)$ and $v_2=g_3(c_2)=g_3(c_4)$ lie on $\partial B$, and the fibres over them have types $g_3^{-1}(v_1): (2,2,1,1)$ containing $c_1,c_3$ and two simple roots and $g_3^{-1}(v_2): (3,3)$ containing $c_2,c_4$.

\subsection{Gluing and first properties}\label{sec:gluing}

Define $F:\Chat\to\Chat$ by
\[
  F=g_1\ \text{on }D_1,\qquad
  F=g_3\ \text{on }A,\qquad
  F=g_2\ \text{on }D_2.
\]
The three pieces agree on the common boundary circles by construction, so $F$ is continuous. Near a gluing circle the two adjacent pieces are unbranched and map the two half-neighbourhoods of a boundary point onto complementary half-neighbourhoods of its image. Hence, $F$ is a local homeomorphism on $\partial A$, and $F$ is a Thurston map. 

\begin{lemma}\label{lem:F-properties}
The map $F$ constructed above has the following properties.
\begin{enumerate}
  \item $\deg F=6$ and $\operatorname{Crit}(F)=\{0,c_0,c_1,c_2,c_3,c_4,\infty\}$, with local degrees $3,2,2,3,2,3,2$ respectively.
  \item $F^{-1}(0)=\{0,v_0,v_1,v_2\}=P_F$.
  \item $F$ commutes with $\sigma$ and $F(\widehat{\mathbb{R}})=[v_0,v_1]$. On $\widehat{\mathbb{R}}$, the points appear in the cyclic order: $\infty$, $v_2<c_3<v_0<c_0<0<c_1<v_1$.
  \item $F^{-1}(\beta)=\beta_1\cup\beta_2$, where $\beta_1=\partial D_1$ and $\beta_2=\partial D_2$. Both components are homotopic to $\beta$ rel $P_F$, with $\deg(F,\beta_1)=4$ and $\deg(F,\beta_2)=2$. Thus, $(F,\beta)$ is a folding map with $m(F,\beta)=2$.
  \item The orbifold of $F$ has signature $\bigl(\nu_F(v_0),\nu_F(v_1),\nu_F(v_2),\nu_F(0)\bigr)=(2,2,3,\infty)$. In particular, it is hyperbolic.
\end{enumerate}
\end{lemma}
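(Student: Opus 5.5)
The plan is to check each item directly from the piecewise definition, since $\Chat=D_1\cup A\cup D_2$ and every fibre of $F$ is the union of the fibres of $g_1,g_3,g_2$.

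\textbf{Item (1).} The degree is computed over a point $w\in\operatorname{int}(D)$. Such a point has no preimage in $A$, because $g_3(A)=\Chat\setminus\operatorname{int}(D)$. It has $4$ preimages under $g_1$ and $2$ under $g_2$, for a total of $6$. This agrees with the count of $6$ preimages over $\Chat\setminus D$ coming from $g_3$. The critical set is the union of the critical sets of the three pieces: $\{0,c_0\}$ from $g_1$, $\{c_1,\dots,c_4\}$ from $g_3$, and $\{\infty\}$ from $g_2$. We argued just before the lemma that $F$ is a local homeomorphism along the gluing circles, so no critical points arise there. The local degrees are read off from the construction. As a consistency check, Riemann--Hurwitz requires $2\cdot6-2=10$, and indeed $2+1+1+2+1+2+1=10$.

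\textbf{Item (2).} The point $0$ lies in $\operatorname{int}(D)$, so its preimages come only from $g_1$ and $g_2$. Under $g_1$ we get $\{0,v_0\}$: here $0$ has local degree $3$, and $p^{-1}(0)=\{0,-4\sqrt[3]{4}/3\}$ with $v_0^{\rm mod}=-4\sqrt[3]{4}/3$. Under $g_2$ we get $g_2^{-1}(0)=\{v_1,v_2\}$.

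To show this set equals $P_F$, compute the forward orbits of the critical values:
\begin{itemize}
\item[] $F(0)=0$;
\item[] $F(c_0)=v_0$ and $F(v_0)=0$;
\item[] $F(\infty)=v_0$;
\item[] $F(c_1)=F(c_3)=v_1$ and $F(v_1)=0$;
\item[] $F(c_2)=F(c_4)=v_2$ and $F(v_2)=0$.
\end{itemize}
Hence $P_F=\{0,v_0,v_1,v_2\}$.

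\textbf{Item (3).} Each piece is $\sigma$-equivariant and the pieces agree on the boundaries, so $F\circ\sigma=\sigma\circ F$. For the real order, read off the placements:
\begin{itemize}
\item[] from the model $p$, $v_0^{\rm mod}<-\sqrt[3]{4}<0$;
\item[] from the choice of $D_2$, $v_2<0<v_1$ in $\operatorname{int}(D_2)$;
\item[] the real vertices $c_1,c_3$ lie in $A$ on the appropriate sides, as in Figure~\ref{fig:g2}.
\end{itemize}
The equality $F(\widehat{\mathbb R})=[v_0,v_1]$ is the step that needs the most care. I would trace $\widehat{\mathbb R}$ piece by piece:
\begin{itemize}
\item[] the real segment of $D_1$ maps into $D\cap\mathbb R$, and reaches $v_0$, as for the model $p$ on $K(p)\cap\mathbb R$;
\item[] the two real segments of $D_2$ fold at $\infty$ onto the real interval from $v_0$ to $b^+$;
\item[] the real arcs in $A$ pass through $c_1$ and $c_3$, map to $\widehat{\mathbb R}\setminus\operatorname{int}(D)$, and turn back at $v_1$ (resp. $v_2$) according to the $z^2$ local model.
\end{itemize}
One then checks that these images assemble into the interval $[v_0,v_1]$ as the statement asserts. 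Which side each arc folds to depends on the $\sigma$-equivariant choices in Figure~\ref{fig:g2}, and this bookkeeping is where I expect the real difficulty.

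\textbf{Item (4).} First, $F^{-1}(\beta)$ avoids $A\setminus\partial A$, since $g_3$ maps into $\Chat\setminus\operatorname{int}(D)$ and points of $\beta$ have preimages only on $\partial A$. It also avoids the interiors of $D_1$ and $D_2$, by properness of $g_1$ and $g_2$. So $F^{-1}(\beta)=\partial D_1\cup\partial D_2$, with the degrees $4$ and $2$ given by the boundary coverings. For the homotopy classes:
\begin{itemize}
\item[] $\beta_1=\partial D_1$ bounds $D_1\ni 0,v_0$ and has $v_1,v_2,\infty$ outside, so it is homotopic to $\beta$ rel $P_F$; the region between them is an annulus containing no postcritical point.
\item[] $\beta_2=\partial D_2$ separates $\{v_1,v_2\}\subset D_2$ from $\{0,v_0\}$, which is the same separation.
\end{itemize}
Both curves are essential, with two points of $P_F$ on each side, so $(F,\beta)$ is a folding map with $m(F,\beta)=2$.

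\textbf{Item (5).} Compute $\nu_F$ from the backward orbits:
\begin{itemize}
\item[] $0$ is a periodic critical point, so $\nu_F(0)=\infty$.
\item[] $v_0$ has preimages $c_0$ and $\infty$, both of local degree $2$. These are not in $P_F$ and are not images of critical points, so further iterates only contribute local degree $1$ beyond them. Hence $\nu_F(v_0)=2$.
\item[] $v_1$ has preimages $c_1,c_3$ of local degree $2$, so $\nu_F(v_1)=2$.
\item[] $v_2$ has preimages $c_2,c_4$ of local degree $3$, so $\nu_F(v_2)=3$.
\end{itemize}
To confirm that nothing larger appears, note that the only points whose iterated preimages can meet critical points are the values listed above, and none of $c_0,\dots,c_4,\infty$ is postcritical. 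The Euler characteristic is
\[
2-\tfrac12-\tfrac12-\tfrac23-1=-\tfrac23<0,
\]
so the orbifold is hyperbolic.
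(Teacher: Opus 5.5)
\textbf{The gap is in item (3).} Items (1), (2), (5) and most of (4) follow the paper's argument; counting preimages over $\operatorname{int}(D)$ instead of over $W$ makes no difference. In item (3), however, you never prove $F(\widehat{\mathbb R})=[v_0,v_1]$. You explicitly defer the ``bookkeeping'', and the partial description you give is wrong.

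The real arc of $A$ through $c_3$ does not turn back at $v_2$. By construction $F(c_3)=v_1$, as you yourself use in item (2). The fibre over $v_2$ is $\{c_2,c_4\}$, which is non-real because $c_4=\sigma(c_2)\neq c_2$. In fact $v_2\notin F(\widehat{\mathbb R})$ at all, so a fold at $v_2$ would contradict the very statement you are proving.

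Your description of the $D_1$ piece is also too weak. Saying that $\widehat{\mathbb R}\cap D_1=[x^-,x^+]$ ``maps into $D\cap\mathbb R$ and reaches $v_0$'' does not determine its image. Since $D\cap\mathbb R=[b^-,b^+]$ with $b^-<v_0$, nothing you say rules out $g_1(x^-)=b^-$. If that happened, the image of $\widehat{\mathbb R}$ would be different.

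\textbf{The missing step: where the real boundary points of $D_1$ go.} In model coordinates, write $\partial\Delta_1\cap\mathbb R=\{u^-,u^+\}$.
\begin{enumerate}
\item Since $v_0^{\rm mod}=-\tfrac{4\sqrt[3]{4}}{3}\in\operatorname{int}\Delta_1$, we have $u^-<v_0^{\rm mod}$. Hence $p(u^-)=(u^-)^3\bigl(u^-+\tfrac{4\sqrt[3]{4}}{3}\bigr)>0$, and trivially $p(u^+)>0$. Therefore $g_1(x^\pm)=b^+$.
\item It follows that $g_1$ maps $[x^-,x^+]$ onto $[v_0,b^+]$. It decreases to the minimum $v_0$ at $c_0$ and then increases. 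The point $0$ is not an extremum, because its local degree $3$ is odd.
\item Together with $g_2(y^\pm)=b^+$, both endpoints of each real arc of $A$ map to $b^+$.
\item Each such arc contains exactly one critical point, $c_1$ or $c_3$, with value $v_1$. It maps into the arc $\widehat{\mathbb R}\setminus\operatorname{int}(D)$, which runs from $b^+$ through $v_1,\infty,v_2$ to $b^-$. The only monotone path from $b^+$ to $v_1$ inside that arc is $[b^+,v_1]$, so each real arc of $A$ maps onto $[b^+,v_1]$.
\end{enumerate}
Taking the union of the images then gives $F(\widehat{\mathbb R})=[v_0,b^+]\cup[b^+,v_1]=[v_0,v_1]$. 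This computation is also exactly the compatibility that makes the $\sigma$-equivariant gluing of $g_1$ and $g_3$ along $\partial D_1$ possible.

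\textbf{A smaller slip in item (4).} On a four-punctured sphere, two essential curves inducing the same partition of $P_F$ need not be homotopic, since infinitely many classes induce each partition. So ``the same separation'' does not justify $\beta_2\sim\beta$. Argue as you did for $\beta_1$: $\beta$ and $\beta_2$ cobound an annulus contained in $A$, which is disjoint from $P_F$.
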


\begin{proof}
(1) Counting preimages of a regular value in the annulus $W$ shows $\deg F=6$. The pieces are unbranched near the gluing circles, so $\operatorname{Crit}(F)$ is the union of the critical sets of the three pieces: $0$ and $c_0$ from $g_1$, $\infty$ from $g_2$, and $c_1,c_2,c_3,c_4$ from $g_3$, with the displayed local degrees.

(2) One computes $F(0)=g_1(0)=0$; $F(c_0)=F(\infty)=v_0$ and $F(v_0)=g_1(v_0)=0$; $F(c_1)=F(c_3)=v_1$ and $F(v_1)=g_2(v_1)=0$; $F(c_2)=F(c_4)=v_2$ and $F(v_2)=g_2(v_2)=0$. Hence every critical orbit lands on the fixed point $0$, and $P_F=\{0,v_0,v_1,v_2\}$.

(3) Each piece was chosen $\sigma$-equivariant and the gluing respects $\sigma$, so $F$ commutes with $\sigma$. The construction is arranged so that, traveling along the real circle from $\infty$ through the left half, one meets in turn $v_2$, $c_3$, $v_0$, $c_0$, $0$, $c_1$ and $v_1$. This is the displayed cyclic order. 

To prove $F(\widehat{\mathbb{R}})=[v_0,v_1]$, we first work in the model coordinates of the quartic piece. Write $\{u^-,u^+\}=\partial\Delta_1\cap\mathbb{R}$ with $u^-<0<u^+$. Since $v_0^{\mathrm{mod}}=-\tfrac{4\sqrt[3]{4}}3$ lies in $\Delta_1$, we have $u^-<-\tfrac{4\sqrt[3]{4}}3$, hence $p(u^-)=(u^-)^{3}\bigl(u^-+\tfrac{4\sqrt[3]{4}}3\bigr)>0$ and $p(u^+)>0$. Both values are real points of $\partial\Delta$, hence both equal the right point of $\partial\Delta\cap\mathbb{R}$. Transported back by $H_0^{-1}$, this says that both real points of $\partial D_1$ are mapped by $g_1$ to $b^+$. On $[u^-,-\sqrt[3]{4}]$, the model $p$ decreases from its boundary value to the minimum $v_0^{\mathrm{mod}}=p(-\sqrt[3]{4})$, and on $[-\sqrt[3]{4},u^+]$ it increases back (the point $0$ has odd local degree and is not an extremum). Hence the image of $\widehat{\mathbb{R}}\cap D_1$ under $g_1$ is $[v_0,b^+]$. On $\widehat{\mathbb{R}}\cap D_2$ the map $g_2$ has its unique extremum at $\infty$, with value $v_0$, and sends the two real boundary points to $b^+$ and its image is again $[v_0,b^+]$. 

Finally, $\widehat{\mathbb{R}}\cap A$ consists of two arcs, containing $c_1$ and $c_3$ respectively. On each of them $g_3$ rises from $b^+$ at the endpoints to the maximum $v_1$ at the critical point and returns, so the image is $[b^+,v_1]$. Taking the union of the three images, $F(\widehat{\mathbb{R}})=[v_0,b^+]\cup[b^+,v_1]=[v_0,v_1]$.

(4) The map $g_3$ maps onto $\Chat\setminus D$ with $g_3^{-1}(\beta)=\partial D_2\cup\partial D_1$ (the outer boundary of $A_2$ and the inner boundary of $A_1$), while $g_1^{-1}(\beta)=\partial D_1$ and $g_2^{-1}(\beta)=\partial D_2$. Hence
\[
  F^{-1}(\beta)=\partial D_2\cup\partial D_1=\beta_1\cup\beta_2,
\]
where $\beta_1:=\partial D_1, \beta_2:=\partial D_2. $ Each of $\beta_1,\beta_2$ separates $\{0,v_0\}$ from $\{v_1,v_2\}$ and is therefore essential. Note that $\beta\subset A$ and $A\cap P_F=\emptyset$. Hence both $\beta_1$ and $\beta_2$ are homotopic to $\beta$ rel $P_F$. The degrees are $4$ and $2$ by construction, so $(F,\beta)$ is a folding map with $m(F,\beta)=2$.

(5) The fibres of $F$ over the four postcritical points are listed in Remark~\ref{rem:fibres} below. The iterated preimages of $v_0$ and of $v_1$ have local degrees $1$ or $2$ only, so $\nu_F(v_0)=\nu_F(v_1)=2$; those of $v_2$ have local degree $3$ only, so $\nu_F(v_2)=3$; and $\deg(F^{\circ n},0)=3^{n}$ is unbounded, so $\nu_F(0)=\infty$. The Euler characteristic is
$$2-\bigl(\frac12+\frac12+\frac23+1\bigr)=-\frac23<0,$$ so the orbifold is hyperbolic.
\end{proof}

\begin{remark}[Fibres over the postcritical set]\label{rem:fibres}
The fibres of $F$ over the four postcritical points, read off directly from the three pieces, are as follows. The local degrees, in the order in which the points are listed, are given on the right:
\[
\begin{aligned}
  F^{-1}(0)&=\{0,v_0,v_1,v_2\}, &&(3,1,1,1),\\
  F^{-1}(v_0)&=\{c_0,\infty,w_1,w_2\}, &&(2,2,1,1),\\
  F^{-1}(v_1)&=\{c_1,c_3,s_1,s_2\}\subset A, &&(2,2,1,1),\\
  F^{-1}(v_2)&=\{c_2,c_4\}\subset A, &&(3,3).
\end{aligned}
\]
Here $w_1,w_2$ are the two preimages of $v_0$ under $g_1$ different from $c_0$, and $s_1,s_2$ are the two simple points of $g_3^{-1}(v_1)$ in $A$.
\end{remark}

\subsection{The second characteristic curve}\label{sec:beta-prime}

We now construct the second characteristic curve $\beta'$ of Theorem~\ref{thm:A}. Up to homotopy rel $P_F$, it is the boundary of a small neighbourhood of the arc $[0,v_1]$.

\begin{lemma}\label{lem:beta-prime}
There is a Jordan curve $\beta'$ in $\Chat\setminus P_F$, separating $\{0,v_1\}$ from $\{v_0,v_2\}$, such that $F^{-1}(\beta')=\beta'_1\cup\beta'_2$ consists of two Jordan curves, both essential and homotopic to $\beta'$ rel $P_F$, with $\deg(F,\beta'_1)=4$ and $\deg(F,\beta'_2)=2$. In particular $(F,\beta')$ is a folding map with $m(F,\beta')=2$, and $\beta'$ is not homotopic to $\beta$ rel $P_F$.
\end{lemma}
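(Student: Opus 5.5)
The plan is to realize $\beta'$ as the boundary of a thin regular neighbourhood of the real arc $J:=[0,v_1]\subset\widehat{\mathbb{R}}$, and to compute $F^{-1}(\beta')$ by first computing $F^{-1}(J)$. Let $N$ be a $\sigma$-symmetric closed regular neighbourhood of $J$, chosen so small that $N\cap P_F=\{0,v_1\}$, and put $\beta':=\partial N$. Then $\beta'$ separates $\{0,v_1\}$ from $\{v_0,v_2\}$, so it is essential. It is not homotopic to $\beta$ rel $P_F$, because Lemma~\ref{lem:F-properties}(4) shows that $\beta$ realizes the other separation pattern $\{0,v_0\}\mid\{v_1,v_2\}$.

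\emph{Step 1: the real part of $F^{-1}(J)$.} Use the cyclic order and the real dynamics from Lemma~\ref{lem:F-properties}(3) and its proof. On the real segment $[0,v_1]$ the map $F$ increases from $0$ to the maximum $v_1=F(c_1)$, passing $b^+$ at the real point of $\partial D_1$. It then decreases back to $F(v_1)=0$. Hence $[0,v_1]\subset F^{-1}(J)$ covers $J$ with degree $2$, folded at $c_1$.

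The same argument applies to the segment $[v_2,v_0]$: there $F$ goes $0\nearrow v_1=F(c_3)\searrow 0$. So $K_2:=[v_2,v_0]$ is a second real piece of $F^{-1}(J)$, covering $J$ with degree $2$, folded at $c_3$. The other real segments map into $[v_0,0)$, so they contribute nothing.

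\emph{Step 2: the non-real part and the count.} At $0$ the local degree is $3$, so there are three lifts of $J$ starting at $0$. One of them is the real segment $[0,c_1]$. The other direction of the real line goes into $[c_0,0]$ and maps to $[v_0,0]$, so it is not a lift of $J$. The remaining two lifts are non-real and are interchanged by $\sigma$. Each lifted arc ends at a preimage of $v_1$ and contains no critical point in its interior. The reason is that the only critical values are $0,v_0,v_1,v_2$, and $J$ meets them only at its endpoints. These two lifts must therefore end at the two simple points $s_1,s_2=\sigma(s_1)$ of $F^{-1}(v_1)$, since $c_1,c_3$ are already used.

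Thus $K_1:=[0,v_1]\cup\{\text{the two non-real lifts}\}$ is a tree with three legs at $0$ and degree $2+1+1=4$ over $J$. Counting preimages of an interior point of $J$ gives $4+2=6=\deg F$. So $F^{-1}(J)=K_1\sqcup K_2$, and $K_1\cap P_F=\{0,v_1\}$, $K_2\cap P_F=\{v_0,v_2\}$.

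\emph{Step 3: pass to neighbourhoods.} For $N$ thin enough, $F^{-1}(N)$ has exactly two components $N_1\supset K_1$ and $N_2\supset K_2$. The map $F:N_i\to N$ is a branched cover of degree $4$, respectively $2$. Each component deformation retracts onto the tree $K_i$, which gives $\chi(N_i)=1$ (equivalently, apply Riemann--Hurwitz with critical points $0,c_1$ in $N_1$ and $c_3$ in $N_2$). Hence each $N_i$ is a closed disk.

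Set $\beta'_i:=\partial N_i$. Then $F^{-1}(\beta')=\beta'_1\cup\beta'_2$, with $\deg(F,\beta'_1)=4$ and $\deg(F,\beta'_2)=2$. Since $N_1\cap P_F=\{0,v_1\}$, the curve $\beta'_1$ bounds a disk containing exactly $0,v_1$, so it is homotopic to $\beta'$ rel $P_F$. The curve $\beta'_2$ bounds the disk $N_2$ containing exactly $v_0,v_2$. On the four-punctured sphere this means it also separates $\{0,v_1\}$ from $\{v_0,v_2\}$, so it too is homotopic to $\beta'$.

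The main obstacle is Step 2: one must justify that the two non-real lifts from $0$ land at the simple points $s_1,s_2$ and not elsewhere. The degree count forces this, but the count relies on the local degrees in Remark~\ref{rem:fibres}. It also relies on the claim that $[v_2,v_0]$ and $[0,v_1]$ are the only real lifts. That claim needs the precise real monotonicity of $g_1,g_2,g_3$ established in the proof of Lemma~\ref{lem:F-properties}(3).
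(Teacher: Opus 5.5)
Your computation of $F^{-1}([0,v_1])$ is sound. You obtain the tree $K_1$ from the real monotonicity of $F$ and the local-degree bookkeeping at $0$ and over $v_1$. This is a legitimate, somewhat leaner alternative to the paper's piece-by-piece lifting of $[0,b^+]$ and $[b^+,v_1]$ through $g_1,g_2,g_3$. Your conclusion that $N_1,N_2$ are disks mapping with degrees $4$ and $2$ is also correct.

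\textbf{The gap.} It lies in the final step, which is exactly the folding property. You infer that $\beta'_1$ is homotopic to $\beta'$ because it bounds a disk containing exactly $0,v_1$. You infer that $\beta'_2$ is homotopic to $\beta'$ because it induces the same partition. On a four-punctured sphere the separation pattern does not determine the homotopy class: each of the three partitions is realized by infinitely many classes. For instance, let $\delta$ separate $\{v_0,v_1\}$ from $\{0,v_2\}$. The Dehn twist $T_\delta(\beta')$ still separates $\{0,v_1\}$ from $\{v_0,v_2\}$, yet $\textup{i}(T_\delta(\beta'),\beta')=\textup{i}(\delta,\beta')^2>0$. Indeed, Case~2 of Lemma~\ref{lem:no-obstruction} is devoted precisely to curves that induce the same partition as $\beta$ but are not homotopic to it. So the inference you use is not available, and as written your proof does not establish that the preimage components are homotopic to $\beta'$.

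\textbf{The repair.} It uses information you already have. $N_1$ is a regular neighbourhood of the tree $K_1$, and the two non-real legs from $0$ to $s_1,s_2$ meet $P_F$ only at $0$. Collapsing these legs onto $0$ by an isotopy fixing $P_F$ shows that $\partial N_1$ is homotopic rel $P_F$ to the boundary of a regular neighbourhood of $[0,v_1]$, that is, to $\beta'$. This is how the paper argues. For $\beta'_2$, use that $\beta'_1$ and $\beta'_2$ are disjoint essential curves, and disjoint essential curves on the four-punctured sphere are homotopic. Alternatively, shrink $N$ until it is disjoint from $N_2$, which shrinks to $[v_2,v_0]$. Then $\beta'\cap\beta'_2=\emptyset$, and both homotopies follow from disjointness alone.

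\textbf{A minor point.} Your assertion that $F^{-1}(N)$ has exactly two components for thin $N$ deserves a line of justification. Either use the paper's Riemann--Hurwitz argument excluding a single component of degree $6$, or note that $F^{-1}(N)$ lies in a small neighbourhood of $K_1\sqcup K_2$ and every component of it meets $F^{-1}(J)$.
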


\begin{proof}
\emph{Step 1: the preimage of $\ell:=[0,v_1]$.}
Write $\partial D_1\cap\widehat{\mathbb{R}}=\{x^-,x^+\}$ and $\partial D_2\cap\widehat{\mathbb{R}}=\{y^-,y^+\}$, with $x^-<0<x^+$ and $y^-<0<y^+$. Among the critical values $0,v_0,v_1,v_2$ (Remark~\ref{rem:fibres}) only the endpoints $0,v_1$ lie on $\ell$, and $\ell$ meets $\beta=\partial D$ in the single point $b^+$, since $\beta\cap\widehat{\mathbb{R}}=\{b^-,b^+\}$ and $b^-<v_0<0$. We lift the two subarcs $\ell_0:=[0,b^+]$ and $\ell_1:=[b^+,v_1]$ piece by piece, using the fibres listed in Remark~\ref{rem:fibres}.

We first lift $\ell_0=[0,b^+]$. In the following description,
each lifted arc is understood to include its endpoints.
Under $g_1:D_1\to D$, the arc $\ell_0$ has four lifts with
pairwise disjoint interiors. By the monotonicity of $g_1$
on the real line, two of them are
\[
  [0,x^+]\quad\text{and}\quad[x^-,v_0].
\]
The fibre $g_1^{-1}(0)=\{0,v_0\}$ has local degrees $3$ and $1$.
Thus three lifted arcs meet at $0$, whereas only one ends at $v_0$.
The remaining two lifts therefore join $0$ to the two non-real
points $\zeta,\sigma(\zeta)$ of $g_1^{-1}(b^+)$ on $\partial D_1$.
These two arcs are exchanged by $\sigma$.

Under $g_2:D_2\to D$, the two lifts of $\ell_0$ are the real arcs
\[
  [y^+,v_1]\quad\text{and}\quad[v_2,y^-].
\]
This follows from the monotonicity of $g_2$ on the real circle
established in the proof of Lemma~\ref{lem:F-properties}(3).

We next lift $\ell_1=[b^+,v_1]$, whose interior lies in $W$.
On the right real arc of $A$, the map $g_3$ takes the value $b^+$
at $x^+$ and $y^+$ and attains its unique maximum $v_1$ at $c_1$.
Similarly, on the left real arc it takes the value $b^+$ at
$y^-$ and $x^-$ and attains its unique maximum $v_1$ at $c_3$.
Hence, the four real lifts of $\ell_1$ are $[x^+,c_1], [c_3,x^-]$ in $\overline{A_1}$ and $[c_1,y^+],[y^-,c_3]$ in $\overline{A_2}$.

The coverings $A_1\to W$ and $A_2\to W$ have degrees $4$ and $2$,
respectively. Thus there are two further lifts in $\overline{A_1}$
and none in $\overline{A_2}$.
Their endpoints over $b^+$ are $\zeta$ and $\sigma(\zeta)$.
At each of $c_1,c_3$, the two real lifts already account for
the local degree $2$, so the remaining lifts must end at the
simple points $s_1,s_2$ of $g_3^{-1}(v_1)$.
After relabelling these points if necessary, the two remaining
lifts join $\zeta$ to $s_1$ and $\sigma(\zeta)$ to $s_2$,
and are exchanged by $\sigma$.

Since the pieces agree on the gluing circles, the lifts of $\ell_0$ and $\ell_1$ match in pairs over $b^+$ and assemble into exactly two connected components
\[
  \Gamma^a=[0,v_1]\cup\gamma^+\cup\gamma^-,
  \qquad
  \Gamma^b=[v_2,v_0],
\]
where $\gamma^+,\gamma^-$ are the complex conjugate arcs from $0$ through $\zeta,\sigma(\zeta)$ to $s_1,s_2$, the real arm $[0,v_1]$ of $\Gamma^a$ passes through $x^+,c_1,y^+$, and $\Gamma^b$ passes through $y^-,c_3,x^-$. Over an interior point of $\ell_0$, the component $\Gamma^a$ contains four of the six preimages and $\Gamma^b$ the remaining two. Hence, $\deg(F:\Gamma^a\to\ell)=4$ and $\deg(F:\Gamma^b\to\ell)=2$. Note that $0,c_1\in\Gamma^a$ and $c_3\in\Gamma^b$. See Figure \ref{fig:second}. 

\begin{figure}[htbp]
\centering
\begin{tikzpicture}
\node at (0,0){\includegraphics[width=0.7\textwidth]{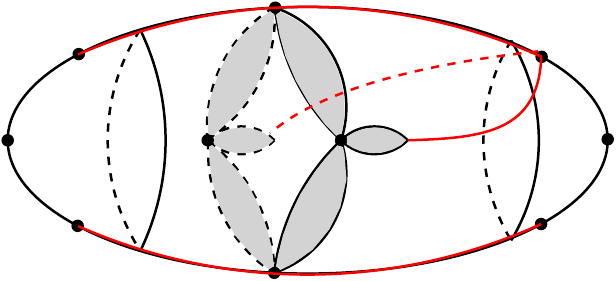}};

\node at (-3, 1.7){$\Gamma^a$};
\node at (-3, -1.8){$\Gamma^b$};
%\ruler{5}{5}
\end{tikzpicture}
\caption{$\Gamma^a$ and $\Gamma^b$.}
\label{fig:second}
\end{figure}

\emph{Step 2: the curve $\beta'$.}
Let $N$ be a small closed regular neighbourhood of $\ell$. Then $N$ is a closed disk. Since $\Gamma^b=[v_2,v_0]$ is compact and disjoint from $\ell$, we may assume, shrinking $N$ if necessary, that $N\cap P_F=\{0,v_1\}$ and $N\cap\Gamma^b=\emptyset$
(in particular $v_0,v_2\notin N$). Set
\[
  \beta':=\partial N. 
\]
It separates $\{0,v_1\}$ from $\{v_0,v_2\}$ and is essential in $\Chat\setminus P_F$.

We claim that $F^{-1}(N)$ has exactly two components $N_0,N_1$, containing $\Gamma^a$ and $\Gamma^b$ respectively, and that both are disks, mapped onto $N$ with degrees $4$ and $2$. Indeed, $\Gamma^a\cup\Gamma^b=F^{-1}(\ell)\subset F^{-1}(N)$ since $\ell\subset\operatorname{int}N$. If the components $N_0\supset\Gamma^a$ and $N_1\supset\Gamma^b$ coincided, $N_0=N_1=:U$, then $U$ would contain all six preimages of an interior point of $\ell$, hence $\deg(F|_U)=6$. But $U$ contains the critical points $0,c_1,c_3$ of local degrees $3,2,2$, so Riemann--Hurwitz would give $\chi(U)=6-\bigl((3-1)+(2-1)+(2-1)\bigr)=2$, forcing $U=\Chat$, which is absurd since $F^{-1}(N)\neq\Chat$. Hence $N_0\neq N_1$, and counting preimages of an interior point of $\ell$ gives $\deg(F:N_0\to N)\ge4$ and $\deg(F:N_1\to N)\ge2$. The degrees sum to $\deg F=6$, so equality holds and there are no further components. The only critical points of $F$ in $N_0$ are $0$ and $c_1$: the points $c_0,\infty,c_2,c_4$ map to $v_0$ or $v_2$, which lie outside $N$, and $c_3\in\Gamma^b\subset N_1$. Hence, $\chi(N_0)=4-\bigl((3-1)+(2-1)\bigr)=1$ and $\chi(N_1)=2-(2-1)=1$, so both components are disks.

Set $\beta'_1:=\partial N_0$ and $\beta'_2:=\partial N_1$. Since $\partial N$ contains no critical value of $F$,
\[
  F^{-1}(\beta')=F^{-1}(\partial N)=\partial N_0\cup\partial N_1
  =\beta'_1\cup\beta'_2,
\]
with $\deg(F,\beta'_1)=4$ and $\deg(F,\beta'_2)=2$. The disk $N_0$ contains $0,v_1$ and is disjoint from $N_1\ni v_0,v_2$, so both $\beta'_1$ and $\beta'_2$ separate $\{0,v_1\}$ from $\{v_0,v_2\}$, as does $\beta'$. For $N$ sufficiently small, $N_0$ is a regular neighbourhood of $\Gamma^a$. Deleting the two terminal branches $\gamma^+,\gamma^-$, whose interiors and terminal endpoints avoid $P_F$, does not change the homotopy class of its boundary rel $P_F$. Hence, $\beta'_1$ is homotopic to $\beta'$. Moreover, $\beta'_1$ and $\beta'_2$ are disjoint essential curves on the four-punctured sphere, so they are homotopic to each other rel $P_F$. In summary, $\beta'_1$ and $\beta'_2$ are both homotopic to $\beta'$ rel $P_F$, and $(F,\beta')$ is a folding map with $m(F,\beta')=2$. Finally, $\beta'$ induces the partition $\{0,v_1\}\,|\,\{v_0,v_2\}$, whereas $\beta$ induces $\{0,v_0\}\,|\,\{v_1,v_2\}$. The partitions differ, so $\beta'$ is not homotopic to $\beta$ rel $P_F$.
\end{proof}

\subsection{Absence of Thurston obstructions}\label{sec:obstruction}

To realize $F$ by a rational map via Theorem~\ref{thm:thurston} it remains to prove the following lemma. 

\begin{lemma}\label{lem:no-obstruction}
The Thurston map $F$ has no Thurston obstructions.
\end{lemma}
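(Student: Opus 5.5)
Since $\#P_F=4$, the discussion following Theorem~\ref{thm:thurston} shows that a Thurston obstruction would be a single essential curve $\gamma$ with $\lambda(\gamma)\ge1$ in \eqref{eq:lambda}. Up to homotopy rel $P_F$, $\gamma$ is determined by its separation of $P_F$ into two pairs. I would therefore split into the three patterns $\{0,v_0\}|\{v_1,v_2\}$, $\{0,v_1\}|\{v_0,v_2\}$ and $\{0,v_2\}|\{v_0,v_1\}$. Fixing the pattern does not fix the homotopy class, since infinitely many classes induce the same partition. So in each case I would argue about an arbitrary $\gamma$ inducing that partition, using only homotopy-invariant information: intersection numbers with $\beta,\beta'$ and arcs between postcritical points.

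\emph{Step 1: a degree bound via intersection numbers.} Put $\gamma$ in minimal position with $\beta$. Any component $\alpha$ of $F^{-1}(\gamma)$ covers $\gamma$ with degree $d_\alpha=\deg(F:\alpha\to\gamma)$, so
\[
\#\bigl(\alpha\cap F^{-1}(\beta)\bigr)=d_\alpha\,\#(\gamma\cap\beta)=d_\alpha\, i(\gamma,\beta).
\]
Suppose $\alpha\sim\gamma$. By Lemma~\ref{lem:F-properties}(4), $F^{-1}(\beta)=\beta_1\cup\beta_2$ with $\beta_1,\beta_2$ disjoint and both homotopic to $\beta$. Hence $\#(\alpha\cap\beta_j)\ge i(\gamma,\beta)$ for $j=1,2$, and $d_\alpha\ge2$ whenever $i(\gamma,\beta)>0$. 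The same argument with $\beta'$ and Lemma~\ref{lem:beta-prime} gives $d_\alpha\ge2$ whenever $i(\gamma,\beta')>0$. Since the degrees of all components of $F^{-1}(\gamma)$ sum to $6$, $\lambda(\gamma)\ge1$ then forces the essential components homotopic to $\gamma$ to have degrees $(2,2)$, $(2,2,2)$ or $(2,2,1\text{-type excluded})$. In every surviving case, all such components have degree exactly $2$.

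\emph{Step 2: reduce the first two patterns to the third.} If $\gamma\sim\beta$, then Lemma~\ref{lem:F-properties}(4) gives $\lambda=1/4+1/2=3/4<1$; if $\gamma\sim\beta'$, Lemma~\ref{lem:beta-prime} likewise gives $3/4$. On the four-punctured sphere, $i(\gamma,\beta)=0$ only when $\gamma\sim\beta$. So any other curve with the first or second pattern has $i(\gamma,\beta)>0$ and $i(\gamma,\beta')>0$, and Step 1 applies. The same holds for every curve with the third pattern. It remains to show that no essential $\gamma\not\sim\beta,\beta'$ has at least two preimage components homotopic to $\gamma$, all of degree $2$.

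\emph{Step 3: arc lifting at $v_2$.} Let $a$ be an arc joining $v_2$ to the other point $x$ of its pair, and choose it so that $\gamma=\partial N$ for a thin regular neighbourhood $N$ of $a$, with $N\cap P_F=\{v_2,x\}$. Every component $U$ of $F^{-1}(N)$ contains a lift of $a$, hence a point of $F^{-1}(v_2)=\{c_2,c_4\}$. By Remark~\ref{rem:fibres}, both points have local degree $3$. So $F^{-1}(N)$ has at most two components, each of degree a multiple of $3$: either one component of degree $6$, or two of degree $3$. The curves of $F^{-1}(\gamma)$ are the boundary curves of these components. A boundary curve homotopic to $\gamma$ must bound, on the side of $U$, exactly the same partition pair $\{v_2,x\}$ of $P_F$. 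I would then apply Riemann--Hurwitz to $U$. Take $U$ containing $c_2$ (local degree $3$), together with the critical points lying over $x$ (read off from Remark~\ref{rem:fibres}). Note that the three lifts of $a$ from $c_2$ end at three points of $F^{-1}(x)$. From this I would list the possible topologies of $U$ and the degrees of its boundary curves. The goal is to show that $U$ cannot have two boundary curves of degree $2$ that are both homotopic to $\gamma$. Any degree-$2$ boundary curve of $U$ bounds a complementary disk containing postcritical points of the wrong type, or else a peripheral one.

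\emph{Main obstacle.} I expect Step 3 to be the hardest part, especially in the degree-$6$ case. There $U$ may be a multiply connected domain whose complementary disks can carry degree-$2$ boundary curves. For this case I would first use the explicit positions of $F^{-1}(0)=P_F$ and $F^{-1}(x)$ relative to $\Gamma^a,\Gamma^b$ and the real symmetry $\sigma$. Concretely, the lifts of $a$ from $c_2$ and $c_4$ land in preimages of $x$ that lie in $A$ or in $D_1$. I would use this to show that every complementary disk of $U$ bounded by a degree-$2$ curve contains at most one point of $P_F$, so that curve is inessential.
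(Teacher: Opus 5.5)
Your Steps~1--2 are correct and coincide with the paper's intersection count. The paper also extracts $k\le 2$ from $\#(F^{-1}(\gamma)\cap\beta_2)=2n$, but your degree-sum bound, which leaves only $(2,2)$ and $(2,2,2)$, is enough.

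The gap is Step~3, which is where the lemma is actually decided. There you state a goal and a plan but give no argument. The tools you propose for the ``hard'' case are the positions of $\Gamma^a,\Gamma^b$ and the symmetry $\sigma$. These say nothing about the preimage of an arbitrary homotopy class $\gamma$, because the lifts of an arbitrary arc $a$ are not controlled by them. Your auxiliary claim, that a boundary curve of $U$ homotopic to $\gamma$ must have the pair $\{v_2,x\}$ on the side of $U$, is unjustified. In the very configuration you must exclude ($U$ the annulus between $\gamma'_1$ and $\gamma'_2$), the two boundary curves see complementary pairs on the $U$ side. The third pattern is not actually treated either.

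The missing input is $F^{-1}(0)=P_F$. When $0\notin N$ (i.e.\ $x\in\{v_0,v_1\}$), no component of $F^{-1}(N)$ meets $P_F$, while every component of $F^{-1}(\Chat\setminus N)$ does.

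\emph{The paper's route.} The paper combines this with the alternation of types across preimage curves. The annulus cobounded by $\gamma'_1,\gamma'_2$ is $P_F$-free, hence is a single component of $F^{-1}(N)$ of degree $d_1+d_2=4$. This is not a multiple of $3$, a contradiction.

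\emph{Completing your route.} Your Riemann--Hurwitz approach also closes once you actually compute. The only critical points in a component $U$ of $F^{-1}(N)$ lie over $v_2$ and $x$, with fibre types $(3,3)$ and $(2,2,1,1)$. This gives $\chi(U)=3-2-1=0$ if $\deg U=3$, and $\chi(U)=6-4-2=0$ if $\deg U=6$. So every component is a $P_F$-free annulus whose two boundary curves are homotopic rel $P_F$.
\begin{itemize}
\item A degree-$3$ component has boundary degrees $(2,1)$. By Step~1, neither boundary curve can then be homotopic to $\gamma$.
\item The degree-$6$ component gives $\lambda(\gamma)\le\frac12+\frac14$.
\end{itemize}
In particular, the multiply connected degree-$6$ domain you anticipate does not occur.

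\emph{The third pattern.} For $x=0$ you need a different count rather than Step~1. The point $0$ has local degree $3$ and $v_0,v_1,v_2$ have local degree $1$. The ``multiple of $3$'' constraint therefore forces some component of $F^{-1}(\operatorname{int}N)$ to contain $\{v_0,v_1,v_2\}$. So every curve of $F^{-1}(\gamma)$ is inessential and $\lambda(\gamma)=0$. This is the paper's Case~1.
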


\begin{proof}
Since $\#P_F=4$, every multicurve in $\Chat\setminus P_F$ consists of a single Jordan curve (\S\ref{sec:thurston}). So a Thurston obstruction is an essential Jordan curve $\gamma$ in $\Chat\setminus P_F$ with $\lambda(\gamma)\ge1$, where $\lambda(\gamma)$ is given by \eqref{eq:lambda}. The curve $\gamma$ separates the four points of $P_F$ into two pairs, hence induces one of the three partitions
\[
  \{0,v_0\}\,|\,\{v_1,v_2\},\qquad
  \{0,v_1\}\,|\,\{v_0,v_2\},\qquad
  \{0,v_2\}\,|\,\{v_0,v_1\}.
\]
We prove that $\lambda(\gamma)<1$ in each case. In each case we denote by $V_0$ the component of $\Chat\setminus\gamma$ containing $0$ and by $V_1$ the other component, and each component of $\Chat\setminus F^{-1}(\gamma)$ is called \emph{$V_0$-type} or \emph{$V_1$-type} according to its image. More precisely, each $V_0$-type component (resp.\ $V_1$-type component) $U$ is a component of $F^{-1}(V_0)$ (resp.\ $F^{-1}(V_1)$), and the restriction $F:U\to V_0$ (resp.\ $V_1$) is a proper branched covering.

\vskip 0.3cm
\noindent\emph{Case 1: $\gamma$ separates $\{0,v_2\}$ from $\{v_0,v_1\}$.} Suppose that some component $\gamma'$ of $F^{-1}(\gamma)$ is essential and homotopic to $\gamma$ rel $P_F$, and let $\Omega$ be the component of $\Chat\setminus\gamma'$ containing $\{0,v_2\}$. Choose an arc $\tau$ joining $0$ to $v_2$ whose interior lies in $V_0\setminus P_F$. Then $\tau\cap\gamma=\emptyset$, and $\operatorname{int}\tau$ contains no critical value (Lemma~\ref{lem:F-properties}(2)). Hence $F^{-1}(\tau)$ consists of six arcs, each joining a point of $F^{-1}(0)$ to a point of $F^{-1}(v_2)$. Every such arc is disjoint from $\gamma'$: its interior maps to $\operatorname{int}\tau$, which does not meet $\gamma$, and its endpoints map into $P_F$, whereas $\gamma'$ maps to $\gamma$, which avoids $P_F$. Thus both endpoints of each lift lie in the same component of $\Chat\setminus \gamma'$, so the number of arc-ends over $0$ lying in $\Omega$ equals the number of arc-ends over $v_2$ lying in $\Omega$. On the one hand, $F^{-1}(0)\cap \Omega=\{0,v_2\}$: the local degrees at these points are $3$ and $1$ (Remark~\ref{rem:fibres}), so the number of ends over $0$ in $\Omega$ is $3+1=4$. On the other hand, the ends over $v_2$ are located at $c_2$ and $c_4$, both of local degree $3$, so their number in $\Omega$ is $3\cdot \#(\Omega\cap\{c_2,c_4\})\in\{0,3,6\}$. Since $4\notin\{0,3,6\}$, this is a contradiction. Hence, no component of $F^{-1}(\gamma)$ is essential and homotopic to $\gamma$ rel $P_F$, and $\lambda(\gamma)=0$.

\vskip 0.3cm
\noindent\emph{Case 2: $\gamma$ separates $\{0,v_0\}$ from $\{v_1,v_2\}$.} If $\gamma$ is homotopic to $\beta$ rel $P_F$, then the components of $F^{-1}(\gamma)$ are, up to homotopy rel $P_F$, those of $F^{-1}(\beta)$ with the same degrees, so $$\lambda(\gamma)=\lambda(\beta)=\frac14+\frac12=\frac34<1$$ by Lemma~\ref{lem:F-properties}(4). Assume from now on that $\gamma$ is not homotopic to $\beta$ rel $P_F$. Homotoping $\gamma$ relative to $P_F$, which does not affect $\lambda(\gamma)$, we may assume that $\gamma$ and $\beta$ are in minimal position. Set $$n:=\#(\gamma\cap\beta)=\textup{i}(\gamma,\beta).$$ Then $n\ge1$ since disjoint essential curves on the four-punctured sphere are homotopic.

Let $\gamma'_1,\dots,\gamma'_k$ be the components of $F^{-1}(\gamma)$ that are homotopic to $\gamma$ rel $P_F$, and write $d_i:=\deg(F:\gamma'_i\to\gamma)$. Then $$\lambda(\gamma)=\sum_{i=1}^{k}\frac{1}{d_i},$$ and if $k=0$ there is nothing to prove. Every $x\in\gamma\cap\beta$ is a regular value of $F$, since the critical values form the set $P_F$ and $\gamma\cap P_F=\emptyset$. Counting the preimages of $x$ on the two components of $F^{-1}(\beta)=\beta_1\cup\beta_2$, whose degrees are $4$ and $2$ (Lemma~\ref{lem:F-properties}(4)), gives
\begin{equation}\label{eq:count-total}
  \#\bigl(F^{-1}(\gamma)\cap\beta_1\bigr)=4n,
  \qquad
  \#\bigl(F^{-1}(\gamma)\cap\beta_2\bigr)=2n .
\end{equation}
On the other hand, $x$ has exactly $d_i$ preimages on $\gamma'_i$, all contained in $F^{-1}(\beta)=\beta_1\cup\beta_2$. Hence,
\begin{equation}\label{eq:count-degree}
  \#(\gamma'_i\cap\beta_1)+\#(\gamma'_i\cap\beta_2)=n\,d_i,
  \qquad i=1,\dots,k .
\end{equation}
Since $\gamma'_i$ is homotopic to $\gamma$ and $\beta_j$ is homotopic to $\beta$ rel $P_F$,
\begin{equation}\label{eq:count-lower}
  \#(\gamma'_i\cap\beta_j)\ \ge\ \textup{i}(\gamma'_i,\beta_j)=\textup{i}(\gamma,\beta)=n,
  \qquad j=1,2.
\end{equation}
Combining \eqref{eq:count-degree} and \eqref{eq:count-lower} gives $n\,d_i\ge2n$, that is $d_i\ge2$. Combining \eqref{eq:count-lower} for $j=2$ with \eqref{eq:count-total} gives $k\,n\le2n$, that is $k\le2$. Consequently
\begin{equation}\label{eq:lambda-bound}
  \lambda(\gamma)=\sum_{i=1}^{k}\frac1{d_i}\ \le\ 1,
\end{equation}
and equality holds only if $k=2$ and $d_1=d_2=2$. We exclude this equality case using two elementary observations.

(a) \emph{Alternation.} Recall that a component of $\Chat\setminus F^{-1}(\gamma)$ is called $V_0$-type (resp. $V_1$-type) if it maps to $V_0$ (resp. $V_1$) by $F$. No critical point of $F$ lies on $F^{-1}(\gamma)$, since $\gamma\cap P_F=\emptyset$. Hence, $F$ is a local homeomorphism along each component $\gamma'$ of $F^{-1}(\gamma)$, mapping a neighbourhood of $\gamma'$ onto a neighbourhood of $\gamma$. It sends the two sides of $\gamma'$ to opposite sides of $\gamma$, so the two components of $\Chat\setminus F^{-1}(\gamma)$ adjacent to $\gamma'$ are of opposite types. Moreover, the degree of a proper branched covering between oriented surfaces with boundary equals the sum of the covering degrees of its boundary circles. Applied to a $V_1$-type component $U$ this gives
\begin{equation}\label{eq:boundary-degree}
  \deg(F:U\to V_1)=\sum_{\gamma'\subset\partial U}
  \deg(F:\gamma'\to\gamma) .
\end{equation}

(b) \emph{Postcritical content.} The fibre over $0$ is $F^{-1}(0)=\{0,v_0,v_1,v_2\}=P_F$ and $F(P_F)=\{0\}\subset V_0$ (Remark~\ref{rem:fibres}, Lemma~\ref{lem:F-properties}(2)). Hence, every $V_0$-type component contains a point of $P_F$: it maps onto $V_0\ni0$, so it contains a point of $F^{-1}(0)=P_F$. No $V_1$-type component contains a point of $P_F$: such a point would map to $0\in V_0$, while its component maps onto $V_1$.

\vskip 0.3cm
Suppose now that $k=2$ and $d_1=d_2=2$. The curves $\gamma'_1$ and $\gamma'_2$ are disjoint, essential, and homotopic to each other rel $P_F$. Hence, they cobound an annulus $H$ with $H\cap P_F=\emptyset$. By observation (b), every component of $\Chat\setminus F^{-1}(\gamma)$ meeting $H$ is a $V_1$-type component. If $H$ contained a component of $F^{-1}(\gamma)$, that curve would have a $V_1$-type component on each of its two sides, contradicting observation (a). Hence, $H$ itself is a single $V_1$-type component $U$, and \eqref{eq:boundary-degree} gives
\begin{equation}\label{eq:degM}
  \deg(F:U\to V_1)=d_1+d_2=4 .
\end{equation}
But $v_2\in V_1$, and the fibre over $v_2$ is $F^{-1}(v_2)=\{c_2,c_4\}$, both points having local degree $3$ (Remark~\ref{rem:fibres}). Since $F:U\to V_1$ is a proper branched covering of degree $\deg(F:U\to V_1)$, the point $v_2$ has exactly that many preimages in $U$ counted with multiplicity. As the preimages of $v_2$ are $c_2$ and $c_4$, each contributing $3$, $\deg(F:U\to V_1)=3\cdot \#(U\cap\{c_2,c_4\})\in\{0,3,6\}$, contradicting \eqref{eq:degM}. Hence, $\lambda(\gamma)<1$.

\vskip 0.3cm
\noindent\emph{Case 3: $\gamma$ separates $\{0,v_1\}$ from
$\{v_0,v_2\}$.}
We argue as in Case~2, with $\beta'$ in place of $\beta$. Indeed, if $\gamma$ is homotopic to $\beta'$ rel $P_F$, then $\lambda(\gamma)=\lambda(\beta')=\tfrac14+\tfrac12<1$ by Lemma~\ref{lem:beta-prime}. Otherwise $n':=\textup{i}(\gamma,\beta')\ge1$, and the counts \eqref{eq:count-total}--\eqref{eq:count-lower}, with the components $\beta'_1,\beta'_2$ of degrees $4$ and $2$ from Lemma~\ref{lem:beta-prime} in place of $\beta_1,\beta_2$, again yield \eqref{eq:count-degree}--\eqref{eq:lambda-bound}. Observations (a) and (b) hold as stated: their proofs use only that $\gamma\cap P_F=\emptyset$, that $F^{-1}(0)=P_F$, and that $0\in V_0$. Since $v_2\in V_1$ also in this case, the fibre count over $v_2$ excludes the equality case exactly as above.

In all three cases $\lambda(\gamma)<1$. Hence, no essential curve is a Thurston obstruction, and the proof is complete.
\end{proof}

\subsection{Realization by a rational map}\label{sec:realization}

We now realize $F$ by a rational map and complete the proof of Theorem~\ref{thm:A}.

\begin{proof}[Proof of Theorem~\ref{thm:A}]
By Lemma~\ref{lem:F-properties}(2),(5), $F$ is a Thurston map with hyperbolic orbifold, and by Lemma~\ref{lem:no-obstruction} it has no Thurston obstruction. Theorem~\ref{thm:thurston} therefore provides a rational map $f$, unique up to M\"obius conjugacy, which is Thurston equivalent to $F$: there are homeomorphisms $h_0,h_1$ of $\Chat$, isotopic rel $P_F$, such that $$h_0\circ F=f\circ h_1$$ on $\Chat$. Transporting the curves of Lemma~\ref{lem:F-properties}(4) and Lemma~\ref{lem:beta-prime} by $h_0$ gives Jordan curves, which we again denote by $\beta$ and $\beta'$, in $\Chat\setminus P_f$. Note that Thurston equivalence preserves the postcritical set, the local degrees, the separation patterns and the homotopy classes of curves relative to the postcritical set. Hence, each of $f^{-1}(\beta)$ and $f^{-1}(\beta')$ consists of two Jordan curves, essential and homotopic to $\beta$ (resp.\ $\beta'$) rel $P_f$, mapping with degrees $4$ and $2$, and $\beta,\beta'$ induce the two different partitions $\{0,v_0\}\,|\,\{v_1,v_2\}$ and $\{0,v_1\}\,|\,\{v_0,v_2\}$ of $P_f=\{0,v_0,v_1,v_2\}$. The orbifold of $f$ has the same signature $(2,2,3,\infty)$ as that of $F$. In particular it is hyperbolic, and hence is not a Latt\`es map.

Since $F$ commutes with the reflection $\sigma$ (Lemma~\ref{lem:F-properties}(3)), the conjugate $\sigma\circ f\circ\sigma$ is Thurston equivalent to $F$ as well. The uniqueness in Theorem~\ref{thm:thurston} then implies that $f$ can be chosen with real coefficients. We normalize $f$ so that the superattracting fixed point is $0$, and $\infty$ is a simple critical point.
\end{proof}

\subsection{A numerical illustration}\label{sec:formula}
Under the above normalization, we use the real rational model $f$ as a numerical illustration. 
\begin{equation}\label{eq:explicit-f}
  f(w)=\frac{P(w)}{a\,Q(w)^{3}-c\,P(w)},
\end{equation}
where
\[
  P(w)=w^{3}(cw+1)\bigl(\bigl(c-\tfrac12\bigr)w+1\bigr)
       \bigl(\bigl(c+\tfrac12\bigr)w+1\bigr),
  \quad Q(w)=(c^{2}+u)w^{2}+2cw+1. 
\]
The parameters were obtained numerically and have approximate values 
\begin{equation}\label{eq:constants}
  c\approx0.22428,\qquad
  u\approx0.55789,\qquad
  a\approx0.09955 .
\end{equation}
Figure \ref{fig:three-images} shows the numerically computed Julia set of $f$, together with the approximate critical orbits and the curves $\beta, \beta'$. 

\vskip 0.5cm
\paragraph*{Acknowledgments} The authors would like to thank Guizhen Cui and Yan Gao for helpful discussions and suggestions. The authors are partially supported by NSFC (Grant no.~12271115, 12526609, 12601135).

\end{document}